\newtheorem{theo}{Theorem}[section]
\newtheorem{lemm}[theo]%
 {Lemma}
{Definition}
\newtheorem{prop}[theo]%
{Proposition}
\newtheorem{coro}[theo]%
{Corollary}
{Definition-Proposition}
{Conjecture}
{Remark}
{Question}
{Example}
\newcommand{\finpreuve}{\mbox{} \hfill \mbox{$\Box$}}
\newenvironment{preuve}{\noindent {\it Proof: }}{\finpreuve}
\DeclareSymbolFont{cyrletters}{OT2}{wncyr}{m}{n}
\DeclareMathSymbol{\Sha}{\mathalpha}{cyrletters}{"58}
\newcommand{\Pic}{\mathrm{Pic}}
\newcommand{\Hom}{\mathrm{Hom}}
\newcommand{\cd}{\operatorname{\mathit{cd}}}
\newcommand{\et}{\mathit{et}}
\def\F{{\mathbb F}}
\def\Z{{\mathbb Z}}
\def\fq{{\mathbb F}}
\def\Gm{{\mathbb G}}
\def\Frob{{\rm Frob}}
\def\Gl{{\rm Gl}}
\def\Gal{{\rm Gal}}
\def\ab{{\mathit{ab}}}
\begin{document}

\date{}
\title{The $K(\pi,1)$-property for marked curves over finite fields}

\author{Philippe Lebacque and Alexander Schmidt}

\maketitle

\begin{abstract}
We investigate the $K(\pi,1)$-property for $p$ of smooth, marked curves $(X,T)$ defined over finite fields of characteristic $p$.  We prove that $(X,T)$ has the $K(\pi,1)$-property  if $X$ is affine and give positive and negative examples in the proper case. We also consider the unmarked proper case over a finite field of characteristic different to $p$.\\
2010 Math. Subj. Class. 11R34, 11R37, 14F20\\
\noindent{\bf Key words:} {Galois cohomology, \'{e}tale cohomology, restricted ramification}
\end{abstract}

\section{Introduction}

In \cite{Sch1},\cite{Sch2},\cite{Sch3}, the second author investigated the $K(\pi,1)$-property for $p$ of arithmetic curves whose function field is of characteristic different to $p$. As  a result, the Galois group  of the maximal unramified outside $S$ and $T$-split pro-$p$-extension of a global field of characteristic different to $p$ is often of cohomological dimension less or equal to two.
In this paper we consider the case of a smooth curve over a finite field of characteristic $p$.
We prove that $(X,T)$ has the $K(\pi,1)$-property  if $X$ is affine and give positive and negative examples in the proper case. We also consider the unmarked proper case over a finite field of characteristic different to $p$, which was left out in the earlier papers.

\medskip\noindent
The authors would like to thank the referee for his valuable suggestions.

\subsection{The marked \'{e}tale site and the \boldmath $K(\pi,1)$-property}
Let $X$ be a regular one-dimensional noetherian  scheme defined over $\F_{q}$ (with $q=p^f$) and let $T$ be a finite set of closed points. In \cite{Sch3}, the second author defined the marked site $(X,T)$ of $X$ at $T$ considering finite \'etale  morphisms $Y\to X$ inducing isomorphisms $k(y)\cong k(x)$ on the residue fields for any closed point $y\in Y$ mapping to $x\in T$. Let $M$ be a $p$-torsion sheaf. The resulting cohomology groups are denoted by $H^i(X,T,M)$ and they satisfy the usual properties we expect from \'etale cohomology groups. He also proved (see \cite{Sch3} for more details) that these finite marked \'{e}tale morphisms fit into a Galois theory and (after choosing a base geometric point $\widebar{x}\notin T$) we denote by $\pi_{1}(X,T)$ the profinite group classifying \'{e}tale coverings of $X$ in which the points of $T$ split completely. We denote by $\widetilde{(X,T)}(p)$ the universal pro-$p$-covering of $(X,T)$. The projection $\widetilde{(X,T)}(p)\to X$ is Galois with Galois group the maximal pro-$p$-quotient $\pi_{1}(X,T)(p)$ of  $\pi_{1}(X,T)$.

Let $M$ be a discrete $p$-torsion $\pi_{1}(X,T)(p)$-module. Consider the Hochschild-Serre spectral sequence:
\[
E_2^{ij}=H^i(\pi_{1}(X,T)(p),H^j(\widetilde{(X,T)}(p),T,M))\Rightarrow H^{i+j}(X,T,M).
\]
The edge morphisms provide homomorphisms
\[
\phi_{i,M}:H^i(\pi_{1}(X,T)(p),M)\to H^{i}(X,T,M).
\]
We say that $(X,T)$ has the $K(\pi,1)$-property for $p$ if $\phi_{i,M}$ is an isomorphism for all $M$ and all $i\ge 0$.  The following Lemma~\ref{kpi} implies in particular, that $(X,T)$ has the $K(\pi,1)$-property for $p$ if $\phi_{i,\F_p}$ is an isomorphism for $i\ge 2$.
\begin{lemm}(cf. \cite{Sch3} Lemma 2.2) \label{kpi} $\phi_{i,M}$
is an isomorphism for $i=0,1$ and is a monomorphism for $i=2$. Moreover, $\phi_{i,M}$ is an isomorphism for all $i\geq 0$ if and only if
\[
\varinjlim_{(Y,T')} H^i(Y,T',M)=0 \ \text{for all }i\geq 1,
\]
where the direct limit is taken over all finite intermediate coverings $(Y,T')$ of the universal pro-$p$-covering  $\widetilde{(X,T)}(p)\to (X,T).$
\end{lemm}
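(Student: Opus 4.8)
The plan is to read everything off the Hochschild--Serre spectral sequence. Write $\pi:=\pi_1(X,T)(p)$, let $N:=\ker(\pi_1(X,T)\to\pi)$, and set $N^j:=H^j(\widetilde{(X,T)}(p),T,M)=\varinjlim_{(Y,T')}H^j(Y,T',M)$, a discrete $p$-torsion $\pi$-module; then $E_2^{ij}=H^i(\pi,N^j)\Rightarrow H^{i+j}(X,T,M)$ and $\phi_{i,M}$ is the edge map $H^i(\pi,M)=E_2^{i,0}\to H^i(X,T,M)$. Two inputs are needed. First, $\widetilde{(X,T)}(p)$ is connected, being a cofiltered limit of connected Galois $p$-coverings of $X$, so $N^0=M$ as a $\pi$-module; this already makes $\phi_{0,M}$ an isomorphism. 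Second, and this is the real point, $N^1=0$: since $H^1(Y,T',M)=H^1(\pi_1(Y,T'),M)$ at every finite level (the marked site is a $K(\pi,1)$ in degree $1$ for formal reasons), passing to the limit over the cofinal family of Galois $p$-coverings identifies $N^1$ with $H^1(N,M)=\Hom_{\mathrm{cont}}(N,M)$, the $N$-action on $M$ being trivial; and $\Hom_{\mathrm{cont}}(N,\F_p)=0$ because, writing $\Hom(N,\F_p)=\varinjlim_V\Hom(V,\F_p)$ over open normal $V\triangleleft\pi_1(X,T)$ of $p$-power index, any nonzero $f\colon V\to\F_p$ has finite $\pi_1(X,T)/V$-orbit, so $V':=\bigcap_g\ker(f^g)$ is again open normal of $p$-power index and annihilates $f$. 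With $N^1=0$ in hand, the five-term exact sequence $0\to H^1(\pi,M)\xrightarrow{\phi_{1,M}}H^1(X,T,M)\to(N^1)^\pi\to H^2(\pi,M)\xrightarrow{\phi_{2,M}}H^2(X,T,M)$ gives, since $(N^1)^\pi=0$, that $\phi_{1,M}$ is an isomorphism and $\phi_{2,M}$ a monomorphism.

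For the equivalence, the ``if'' direction is immediate: if $N^j=0$ for all $j\ge1$ the spectral sequence collapses onto the bottom row $E_2^{i,0}=H^i(\pi,M)$ and every $\phi_{i,M}$ is an isomorphism. For ``only if'' I would prove $N^n=0$ by induction on $n\ge1$, the base case $n=1$ being the vanishing just established. Assume $N^1=\dots=N^{n-1}=0$; then the second page vanishes in rows $1,\dots,n-1$, so all incoming differentials to $E^{n,0}$ and to $E^{n+1,0}$ of index at most $n$ are zero (in particular $E_{n+1}^{n+1,0}=H^{n+1}(\pi,M)$), the only possibly nonzero differential leaving $E_2^{0,n}=(N^n)^\pi$ is $d_{n+1}\colon(N^n)^\pi\to E_{n+1}^{n+1,0}$, and the filtration on $H^n(X,T,M)$ reduces to a short exact sequence $0\to H^n(\pi,M)\xrightarrow{\phi_{n,M}}H^n(X,T,M)\to\ker(d_{n+1})\to0$. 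Surjectivity of $\phi_{n,M}$ forces $d_{n+1}$ to be injective on $(N^n)^\pi$, while injectivity of $\phi_{n+1,M}$ forces the quotient map $E_{n+1}^{n+1,0}\twoheadrightarrow E_\infty^{n+1,0}$ to be an isomorphism, i.e. $d_{n+1}=0$ on $(N^n)^\pi$; hence $(N^n)^\pi=0$. Since $N^n$ is a discrete $p$-torsion module over the pro-$p$ group $\pi$, and such a module vanishes as soon as its invariants do, we conclude $N^n=0$, completing the induction.

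The step I expect to be the crux is the vanishing $N^1=0$: the identification $N^1\cong\Hom_{\mathrm{cont}}(N,M)$ together with the group-theoretic fact that $\ker(\pi_1(X,T)\to\pi_1(X,T)(p))$ admits no nontrivial pro-$p$ quotient. Everything after that is a mechanical dévissage of the Hochschild--Serre spectral sequence, resting only on the elementary observation that a nonzero discrete $p$-torsion module over a pro-$p$ group has nonzero invariants.
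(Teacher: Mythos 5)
The paper gives no proof of this lemma---it is imported verbatim from \cite{Sch3}, Lemma 2.2---and your reconstruction is correct and follows the same standard route: the crux is exactly the vanishing of $\varinjlim H^1$, which you prove by the Galois-closure/Frattini argument that the normal core $\bigcap_g \ker(f^g)$ of a degree-$p$ subcovering of a Galois $p$-covering of $(X,T)$ is again of $p$-power index over $X$ and hence lies in the tower, after which the d\'evissage of the Hochschild--Serre spectral sequence is mechanical. The only point left implicit is the reduction of $\Hom_{\mathrm{cont}}(N,M)=0$ for a general discrete $p$-torsion module $M$ to the case $M=\F_p$, which is routine since $M$ is a filtered union of finite $p$-primary modules on which $N$ acts trivially.
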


\subsection{Notation} Unless otherwise stated, we use the following notation:
\begin{compactitem}
\item[-] $p$ denotes a prime number.
\item[-] $\F$ is a finite field, $\widebar{\F}$ an algebraic closure of $\F,$ $\widetilde{\F}$ its maximal pro-$p$-extension inside $\widebar{\F}$ and $G_{\F}$ the Galois group of $\widebar{\F}/\F$.
\item[-] $X$ is a smooth projective absolutely irreducible curve defined over $\F$.
\item[-] $k=\F(X)$ the function field of $X$.
\item[-] $g_X$ the genus of $X$.
\item[-] $\widebar{X}=X\times_{\F}\widebar{\F}$, $\widetilde{X}=X\times_{\F}\widetilde{\F}$.
\item[-] $S$, $T$ are two disjoint sets (possibly empty) of closed points of $X$.
\item[-] if $x$ is a closed point of a $X,$ $X_{x}$ denotes the henselization of $X$ at $x$ and $T_{x}=\{x\}$ if $x\in T$ and $\emptyset$ otherwise.
\item[-]  $k_{S}^T$ denotes the maximal pro-$p$-extension of $k$ which is unramified outside $S$ and  in which all places of $T$  split completely. If empty, we omit $S$ (or $T$) from the notation.
\item[-] $G_S^T(k)=\mathrm{Gal}(k_S^T/k)=\pi_1(X-S,T)(p)$.
\item[-] $H^i(X-S,T)$ denotes the $i$-th \'etale cohomology group $H^i_\et(X-S,T,\F_{p})$ of the marked curve $(X-S,T)$.
\item[-] for a pro-$p$-group $G$ we set $H^i(G)=H^i(G,\fq_p)$.
\item[-] for an abelian group $A$ and an integer $m$ we write $A[m]= \ker (A\stackrel{\cdot m}{\to} A)$
\end{compactitem}

\subsection{New results}
Let $X$ be a smooth projective absolutely irreducible curve defined over the finite field $\F$ and let $k=\F(X)$ be the function field of $X$. Let $S$ and $T$ be finite disjoint sets of closed points of $X$. In this paper, we prove the following result:

\begin{theo}\label{main} Assume that $p=\mathrm{char}(\F)$.\smallskip
\begin{compactitem}
\item[\rm (i)] If $S\neq \emptyset$, then $(X-S,T)$ has the $K(\pi,1)$-property for $p$ and $\cd G_S^T(k)=1$.
\item[\rm (ii)] If $T=\emptyset$, then $(X-S)$ has the $K(\pi,1)$-property for $p$ and $\cd G_S(k)\leq 2$.
\end{compactitem}
\end{theo}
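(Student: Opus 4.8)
The plan is to reduce both assertions to the criterion of Lemma~\ref{kpi}: it suffices to prove that $\varinjlim_{(Y,T')} H^i(Y,T',\F_p)=0$ for all $i\ge 1$, the limit being taken over the finite intermediate coverings of $\widetilde{(X-S,T)}(p)\to (X-S,T)$. The case $i=1$ is formal and independent of $S$ and $T$: a $\Z/p$-covering $Z$ of such a $Y$ that splits at $T'$ also splits, as a covering of $X-S$, at $T$, and its Galois closure $\widehat Z$ over $X-S$ again has $p$-power degree (finite $p$-groups are closed under extensions) and again splits at $T$, hence lies in the tower and dominates $Y$; over $\widehat Z$ the torsor $Z$ becomes trivial, so its class dies in the limit. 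The remaining content is in degrees $i\ge 2$, and here I would use that $\mathrm{char}(\F)=p$: from the Artin--Schreier sequence $0\to\F_p\to\Gm_a\xrightarrow{F-1}\Gm_a\to 0$, the vanishing of coherent cohomology of a curve in degrees $\ge 2$ (resp.\ of an affine curve in degrees $\ge 1$), and the surjectivity of $F-1$ on any finite-dimensional $\widebar\F$-vector space, one gets $H^i_\et(U,\F_p)=0$ for $i\ge 2$ when $U$ is a smooth affine curve over a finite field, and $H^i_\et(\widebar Y,\F_p)=0$ for $i\ge 2$ when $Y$ is a smooth proper curve; so $\cd_p\widebar Y\le 1$, and Hochschild--Serre over $\F$ (with $\cd_p G_\F=1$) gives $H^i_\et(Y,\F_p)=0$ for $i\ge 3$ for every smooth curve $Y$ over a finite field.

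For the marked groups I would invoke the exact sequence of \cite{Sch3} comparing $H^*(Y,T',\F_p)$, $H^*_\et(Y,\F_p)$ and the local terms $\bigoplus_{y\in T'} H^*((X-S)_y,\F_p)=\bigoplus_{y\in T'} H^*(k(y),\F_p)$ (passing to the henselization is harmless for finite coefficients). As $k(y)$ is finite, the local terms vanish in degrees $\ge 2$, and combining this with the previous paragraph gives $H^i(Y,T',\F_p)=0$ for $i\ge 3$ in all cases. The whole problem thus reduces to degree $2$: in the affine case ($S\ne\emptyset$) one has $H^2(Y,T',\F_p)=\coker\bigl(\rho\colon H^1_\et(Y,\F_p)\to\bigoplus_{y\in T'}H^1(k(y),\F_p)\bigr)$, and for~(ii) with $S=\emptyset$ (the only case of~(ii) not already contained in~(i)) one is left to prove $\varinjlim_Y H^2_\et(Y,\F_p)=0$ over the pro-$p$ tower of the proper curve $X$.

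For part~(i): every $Y$ in the tower is a smooth affine curve, $Y=\widebar Y-S_Y$ with $S_Y\ne\emptyset$, and $H^1_\et(Y,\F_p)=\coker(F-1\colon\mathcal O(Y)\to\mathcal O(Y))$, so every class is represented by some $f\in\mathcal O(Y)$ (a function with poles only in $S_Y$) via the Artin--Schreier covering $z^p-z=f$, which is automatically \'etale over $Y$; and $\rho$ sends its class to $\bigl(\mathrm{Tr}_{k(y)/\F_p}(f(y))\bigr)_{y\in T'}$. Since $S_Y\ne\emptyset$, Riemann--Roch lets us choose $f$ with poles in $S_Y$ and arbitrarily prescribed values in $\prod_{y\in T'}k(y)$, and the trace maps are surjective; hence $\rho$ is onto, so $H^i(Y,T',\F_p)=0$ for all $i\ge 2$ and all $Y$, and Lemma~\ref{kpi} gives the $K(\pi,1)$-property. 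Then $H^i(G_S^T(k))\cong H^i(X-S,T)=0$ for $i\ge 2$, so $\cd G_S^T(k)\le 1$; and $G_S^T(k)\ne 1$ (it has infinitely many $\Z/p$-quotients, e.g.\ the Artin--Schreier coverings $z^p-z=f$ with $f$ regular on $X-S$ and $\mathrm{Tr}_{k(y)/\F_p}(f(y))=0$ for $y\in T$), so $\cd G_S^T(k)=1$.

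For part~(ii) with $S=\emptyset$, $X$ proper — the step I expect to be the real obstacle — the idea is that every class of $H^2_\et(Y,\F_p)$ already dies after climbing the constant-field $\Z_p$-tower $Y_n$ (the degree-$p^n$ constant-field extension of $Y$), which lies in the pro-$p$ tower of $X$. Since $H^2_\et(\widebar Y,\F_p)=0$, Hochschild--Serre identifies $H^2_\et(Y,\F_p)\cong H^1(G_{\F_Y},M_Y)\cong (M_Y)_{G_{\F_Y}}$, where $M_Y=H^1_\et(\widebar Y,\F_p)$ is a finite $G_{\F_Y}$-module and $\F_Y$ is the constant field of $Y$; since $\widebar Y_n=\widebar Y$, the same computation gives $H^2_\et(Y_n,\F_p)\cong (M_Y)_{G_{\F_{Y_n}}}$, and the transition maps become the restriction maps in $G_{\F_Y}$-cohomology, i.e.\ multiplication by the norm $\sum_{j=0}^{p^n-1}\sigma^j$ for $\sigma$ a generator of $G_{\F_Y}$. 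Writing the action of $\sigma$ on $M_Y$ in Jordan form $\sigma=\sigma_s\sigma_u$ (a prime-to-$p$ semisimple part times a $p$-power-order unipotent part), a short computation shows that for $n$ large this norm operator kills $M_Y^{\sigma_s}$ and has image exactly $(\sigma_s-1)M_Y=(\sigma^{p^n}-1)M_Y$ on a complement, so that the composite transition map $(M_Y)_{G_{\F_Y}}\to (M_Y)_{G_{\F_{Y_n}}}$ is zero for $n\gg 0$. Hence $\varinjlim_Y H^2_\et(Y,\F_p)=0$, Lemma~\ref{kpi} yields the $K(\pi,1)$-property for $X$, and then $H^i(G_\emptyset(k))\cong H^i_\et(X,\F_p)=0$ for $i\ge 3$ forces $\cd G_\emptyset(k)\le 2$. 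Of the ingredients, the Artin--Schreier vanishing, the localization sequence and the Riemann--Roch surjectivity are routine; the delicate points are checking that the Hochschild--Serre identification is compatible with the transition maps, and making the norm computation robust to a possibly nontrivial $p$-part in the Frobenius action on $H^1_\et(\widebar Y,\F_p)$.
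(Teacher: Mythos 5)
Your argument is correct, and for part (i) it is essentially the paper's proof: both reduce, via the excision sequence for the marked site and the local computation at the points of $T'$, to the surjectivity of the restriction map $H^1(Y)\to\bigoplus_{y\in T'}H^1_{nr}(k_y)$ for every affine $Y$ in the tower. The paper obtains this surjectivity from the Strong Approximation Theorem (\cite{NSW}, Thm.~9.2.5), whereas you reprove exactly the instance needed by writing classes as Artin--Schreier coverings $z^p-z=f$ and using Riemann--Roch at the nonempty set $S_Y$ to prescribe the values of $f$ on $T'$; this is more self-contained but not a different mechanism. Where the two proofs genuinely diverge is part (ii) in the proper case: the paper passes in one step to $X_{\widetilde\F}$, notes $H^2(X_{\widetilde\F})=H^2(\widebar X)^{\Gal(\widebar\F/\widetilde\F)}=0$ because $H^2(\widebar X,\F_p)=0$ in characteristic $p$ and $\Gal(\widebar\F/\widetilde\F)$ has order prime to $p$, and then descends by Hochschild--Serre; you stay at finite level and show that the transition maps $H^2(Y)\to H^2(Y_n)$ up the constant-field subtower are eventually zero, which amounts to the identity $\sum_{j=0}^{p^n-1}\sigma^j=(\sigma-1)^{p^n-1}$ on an $\F_p$-vector space (this also disposes of your worry about a nontrivial $p$-part of Frobenius, since for $p^n-1\geq\dim M_Y$ the image $(\sigma-1)^{p^n-1}M_Y$ equals $(\sigma-1)^{p^n}M_Y$, which is killed in the target). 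The two versions are the same phenomenon seen after, respectively before, taking the limit; the paper's is shorter, yours makes the rate of vanishing explicit. Two points you should make explicit: first, that the curves $Y_n$ really lie in the pro-$p$ tower of $X$ --- they are the connected components of $Y\times_X(X\times_\F\F_m)$, and fibre products of intermediate coverings of the universal pro-$p$-covering remain intermediate coverings; second, the vanishing of $\varinjlim H^1$ in your opening paragraph is automatic (it follows formally from $\phi_{1,M}$ being an isomorphism and $\phi_{2,M}$ a monomorphism in Lemma~\ref{kpi}, via the five-term exact sequence), so the Galois-closure argument, though correct, can be omitted.
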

\setdefaultleftmargin{3em}{1em}{}{}{}{}
In the remaining cases, we have the following results.
\begin{theo}\label{th2} Assume that $p=\mathrm{char}(\F)$,  $S=\emptyset$ and $T\ne \emptyset$.\smallskip
\begin{compactitem}
\item[\rm (i)] If\/ $\Pic(X)[p]= 0$, then $(X,T)$ has the $K(\pi,1)$-property for $p$ if and only if $T=\{x\}$ consists of a single point with $p\nmid \deg x$. In this case $\pi_1(X,T)(p)=1$.
\item[\rm (ii)] If\/ $\Pic(X)[p]\neq 0$ and
\[
\sum_{x\in T} \frac{\deg (x)}{(\#\F) ^{\deg (x)/2}-1} > g_X-1,
\]
then $\pi_1(X,T)(p)$ is finite and $(X,T)$ has not the $K(\pi,1)$-property for $p$.
\end{compactitem}
\end{theo}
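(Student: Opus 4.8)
The plan rests on two inputs. By class field theory, $\pi_1(X)(p)^{\ab}$ is the pro-$p$-completion of $\Pic(X)$, with $\Frob_x$ corresponding to the class of $x$; as $\Pic^0(X)$ is finite this yields $0\to\Pic(X)[p^\infty]\to\pi_1(X)(p)^{\ab}\to\Z_p\to0$ in which $\Frob_x$ has image $\deg x$, and a covering of $(X,T)$ is exactly one through whose Galois group all $\Frob_x$ ($x\in T$) die. Secondly, the comparison exact sequence for the marked cohomology of the proper curve (cf. \cite{Sch3}),
\[
0\to H^1(X,T)\to H^1(X)\to\bigoplus_{x\in T}H^1(k(x))\to H^2(X,T)\to H^2(X)\to0,
\]
where $H^i(k(x))\cong\F_p$ for $i\le1$ and $0$ for $i\ge2$, together with the Artin--Schreier identity $H^2(X)\cong\coker\bigl(F-1\mid H^1(X,\mathcal O_X)\bigr)$ of $\F_p$-dimension $\dim_{\F_p}\Pic(X)[p]=:r$ (since $F-1$ is an endomorphism of the finite group $H^1(X,\mathcal O_X)$ with kernel $\Pic(X)[p]$). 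These give, for every marked curve $(Y,T')$ in the tower, $H^i(Y,T')=0$ for $i\ge3$ and $\dim_{\F_p}H^2(Y,T')=\dim_{\F_p}H^1(Y,T')+|T'|-1$; in particular, if $\pi_1(Y,T')(p)=1$ then $H^1(Y,T')=0$ and $\dim_{\F_p}H^2(Y,T')=|T'|-1$. Since $\varinjlim_{(Y,T')}H^1(Y,T')=0$ always, Lemma~\ref{kpi} reduces the $K(\pi,1)$-property to the condition $\varinjlim_{(Y,T')}H^2(Y,T')=0$.

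For (i), assume $\Pic(X)[p]=0$. Then $\pi_1(X)(p)^{\ab}\cong\Z_p$, so $\pi_1(X)(p)$ is topologically cyclic, hence $\pi_1(X)(p)\cong\Z_p=\Gal(\widetilde\F/\F)$ and every pro-$p$ unramified covering of $X$ is a constant field extension. Thus $\pi_1(X,T)(p)\cong\Z_p/\langle\deg x:x\in T\rangle\cong\Z/p^m$ with $m=\min_{x\in T}v_p(\deg x)$, the tower is the finite chain of constant extensions ending at $Z:=X\times\F_{q^{p^m}}$ marked by the set $T_Z$ of points over $T$ ($|T_Z|=p^m|T|$), and $\pi_1(Z,T_Z)(p)=1$. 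If $T=\{x\}$ with $p\nmid\deg x$, then $m=0$, $(Z,T_Z)=(X,T)$, $|T_Z|=1$, and $H^1(X,T)=H^2(X,T)=0$: the $K(\pi,1)$-property holds and $\pi_1(X,T)(p)=1$. In every other case $|T_Z|=p^m|T|\ge2$ (either $|T|\ge2$, or $|T|=1$ and $m\ge1$), so $H^2(Z,T_Z)\cong\F_p^{\,|T_Z|-1}\neq0$ and $\varinjlim H^2=H^2(Z,T_Z)\neq0$: the $K(\pi,1)$-property fails. This proves (i).

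For (ii) set $\Psi:=\sum_{x\in T}\deg x\big/\bigl((\#\F)^{\deg x/2}-1\bigr)$; the crux is that $\pi_1(X,T)(p)$ is finite. Granting this, the conclusion follows as above: if the tower is trivial then $H^1(X,T)=0$ forces $H^1(X)\to\bigoplus_{x\in T}H^1(k(x))$ injective on its $(1+r)$-dimensional source, so $|T|\ge1+r\ge2$ and $H^2(X,T)\neq0$; if not, the top $(Z,T_Z)$ of the finite tower has $\pi_1(Z,T_Z)(p)=1$ and $|T_Z|\ge p|T|\ge2$ (a nontrivial covering splits each point of $T$ into $\ge p$ points), so again $H^2(Z,T_Z)\neq0$; either way $\varinjlim H^2\neq0$. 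To prove finiteness, pass to the maximal constant extension $X'=X\times\F'$, $\F':=\F_{q^{p^m}}$, $m=\min_xv_p(\deg x)$, occurring in the tower: over $(X',T')$ the residual tower is purely geometric, $g_{X'}=g_X$, and $\Psi$ equals the corresponding sum over $T'$ relative to $\F'$ (invariant under constant extension). If this geometric tower were infinite it would contain geometrically connected unramified coverings $Y\to X'$ of arbitrarily large degree $n$; then $g_Y-1=n(g_{X'}-1)$ by Riemann--Hurwitz, and each point of $T'$ of degree $d$ over $\F'$ splits into $n$ distinct degree-$d$ closed points of $Y$, so
\[
\sum_{r\ge1}\frac{r\,B_r(Y)}{(\#\F')^{r/2}-1}\ \ge\ n\Psi,
\]
$B_r(Y)$ being the number of degree-$r$ closed points of $Y$. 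The Weil explicit formula bounds the left side by $g_Y+c$ for a constant $c$ depending only on $\F'$; dividing by $n$ and letting $n\to\infty$ gives $\Psi\le g_X-1$, contradicting the hypothesis. Hence $\pi_1(X,T)(p)$ is finite and $(X,T)$ has not the $K(\pi,1)$-property.

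The chief obstacle is this finiteness, above all the sharp weight $\deg x\big/\bigl((\#\F)^{\deg x/2}-1\bigr)$: the naive Hasse--Weil bound only gives the weaker weight $\tfrac12(\#\F)^{\deg x/2}$, so one must invoke the precise Weil explicit (basic) inequality and control its lower order terms uniformly along the tower. A secondary task is to set up (or extract from \cite{Sch3}) the comparison exact sequence for marked cohomology in the proper case and to verify the identity $\dim_{\F_p}H^2(Y,T')=\dim_{\F_p}H^1(Y,T')+|T'|-1$ in the required generality, in particular when $\Pic(Y)[p]\neq0$.
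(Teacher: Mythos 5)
Your overall architecture is sound and close to the paper's: part (i) is handled exactly as in the paper (deduce $\pi_1(X)(p)\cong \Z_p$ from $H^2(X)=0$ and the Euler characteristic, identify $\pi_1(X,T)(p)$ with $\Z/p^m$, $m=\min_x v_p(\deg x)$, and then test $H^2$), and your bookkeeping identity $\dim H^2(Y,T')=\dim H^1(Y,T')+|T'|-1$ is the Euler--Poincar\'e formula the paper proves (its Corollary~\ref{EP}); your Artin--Schreier derivation of $\dim H^1(X)-\dim H^2(X)=1$ is a legitimate alternative to the paper's Leray/$\Pic$ computation. Where you differ in method: for the negative cases you run everything through the $\varinjlim H^2\neq 0$ criterion of Lemma~\ref{kpi} evaluated at the top of the finite tower, whereas the paper uses the cheaper obstruction (Corollary~\ref{notkpi1}) that a finite nontrivial pro-$p$-group has $\cd=\infty$ while $H^i(X,T)=0$ for $i\ge 3$; both are valid, and yours has the small advantage of also covering the trivial-group case uniformly.

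The genuine gap is in your finiteness argument for (ii). The inequality you invoke,
\[
\sum_{r\ge1}\frac{r\,B_r(Y)}{(\#\F')^{r/2}-1}\ \le\ g_Y+c,
\]
is false: since $B_r(Y)\sim (\#\F')^r/r$, the general term is asymptotic to $(\#\F')^{r/2}$ and the left-hand side diverges for \emph{every} curve. The explicit-formula identity $\sum_r r B_r/(q^{r/2}-1)=\sum_n N_n q^{-n/2}$ makes the divergence transparent. To repair this one must truncate (restrict to the finitely many degrees occurring in $T'$, or use an Oesterl\'e-type test function with coefficients decaying faster than $q^{-n/2}$) and control the resulting error terms uniformly in the tower --- this is precisely the content of Ihara's Theorem~1~(FF) in \cite{Ih}, which is what the paper cites instead of reproving. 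You flag this as ``the chief obstacle,'' but the step as written does not close; either carry out the truncated explicit-formula argument in full or simply quote Ihara's theorem (note his hypothesis is $\Psi>\max(g_X-1,0)$, which is equivalent to $\Psi>g_X-1$ here since $T\neq\emptyset$ forces $\Psi>0$). A minor further imprecision: $\ker(F-1)$ on $H^1(X,\mathcal O_X)$ is not canonically $\Pic(X)[p]$; what you actually need and what is true is the equality of $\F_p$-dimensions, which follows from the Artin--Schreier sequence together with $\dim H^1(X,\F_p)=1+\dim\Pic(X)[p]$ (the paper's Lemma~\ref{invconv} plus the exact sequence $0\to H^1(\F)\to H^1(X)\to \Hom(\Pic(\widebar X)[p],\F_p)^{G_\F}\to 0$).
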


Finally, we consider the unmarked proper case over a finite field of characteristic different to $p$, which was left out in the earlier papers.

\begin{theo}\label{uglp} Assume that $p\neq \mathrm{char}(\F)$.
Then $X$ has the $K(\pi,1)$-property for $p$ if and only if  $\mu_p (\F)=1$ or $\Pic(X)[p]\neq 0$.

In the remaining case $\mu_p\subset \F$ and $\Pic(X)[p]= 0$ we have
\[
\pi_1^\et(X)(p)\cong \pi_1^\et(\F)(p)\cong \Z_p.
\]
In particular, $H^i(\pi_1^\et(X)(p))$ is always finite and vanishes for $i>3$.
\end{theo}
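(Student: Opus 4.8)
The plan is to exploit the classical picture over a finite field $\F$ of characteristic $\ell\neq p$. Here the relevant pro-$p$ coverings of $X$ come from two sources: the constant-field extension contributing a $\Z_p$ quotient via $\pi_1^\et(\F)(p)\cong\Z_p$, and the geometric part $\pi_1^\et(\widebar X)(p)$, which is the maximal pro-$p$ quotient of the prime-to-$\ell$ fundamental group of the curve $\widebar X$ over $\widebar\F$. For a proper curve of genus $g_X$ this geometric pro-$p$ group is free pro-$p$ of rank $2g_X$ if $p\neq 2$, or a Demushkin-type group, and in any case the $K(\pi,1)$-property for $X$ is governed by whether $\varinjlim H^i(Y,\F_p)$ vanishes for $i\geq 1$ along the pro-$p$ tower, which by Lemma~\ref{kpi} is the criterion to check. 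First I would record that $H^i(\widebar X,\F_p)$ is $\Z/p$, $\Pic(\widebar X)[p]$, and $\Z/p$ in degrees $0,1,2$, and use the Hochschild--Serre spectral sequence for $\widebar X\to X$ with group $G_\F$ (replaced by its pro-$p$ quotient $\Z_p$ along the tower) to reduce everything to the $G_\F$-module structure of $H^*(\widebar X,\F_p)$, in particular of $\Pic(\widebar X)[p]$ and of $H^2(\widebar X,\F_p)\cong\mu_p^{\vee}$ (Poincaré duality).

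The key case distinction is the one in the statement. If $\mu_p\not\subset\F$, then $G_\F$ acts nontrivially on $\mu_p$, hence on $H^2(\widebar X,\F_p)$, so $H^0(\Z_p,H^2)=0$ and $H^2$ is killed in the limit; combined with the fact that $H^1$ in the limit is controlled by $\Pic(\widebar X)[p]$ and the Kummer sequence, one gets the vanishing needed for the $K(\pi,1)$-property. If $\Pic(X)[p]\neq 0$, then along the tower there are enough independent degree-$p$ line bundles to produce unramified $\Z/p$-coverings, and I would argue that passing up the tower kills $H^1$ and (via cup products / Poincaré duality on each proper curve $Y$ in the tower, noting $H^2(Y,\F_p)$ surjects onto $H^2$ of the covering only through the transfer) kills $H^2$ as well, giving the $K(\pi,1)$-property; the positivity of the genus contribution here is what makes the tower infinite and the limit vanish. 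So in both of these cases one concludes that $X$ is a $K(\pi,1)$ for $p$.

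It remains to treat the excluded case $\mu_p\subset\F$ and $\Pic(X)[p]=0$, and to prove the asserted isomorphisms. Here $G_\F$ acts trivially on $H^2(\widebar X,\F_p)$, and $\Pic(\widebar X)[p]$ has no $\F$-rational part, so there is no unramified $\Z/p$-covering of $X$ coming from the geometric side that is defined over $\F$; more precisely, I would show that every $p$-covering of $X$ in which we could hope to gain cohomology is actually pulled back from $\Spec\F$. Concretely: the inclusion $\pi_1^\et(X)(p)\to\pi_1^\et(\F)(p)\cong\Z_p$ is surjective (constant extensions), and one must show it is injective, i.e. that $X\times_\F\widetilde\F$ has trivial pro-$p$ fundamental group. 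This is where $\Pic(X)[p]=0$ enters decisively: it forces $H^1(X,\F_p)=H^1(\F,\F_p)=\Z/p$ (only the constant part survives), so $\pi_1^\et(X)(p)^{\ab}\cong\Z_p$; to upgrade from the abelianization to the full group one iterates over the tower, using that $\Pic$ of each curve in the constant $\Z_p$-tower still has trivial $p$-torsion on the relevant rational part (a Weil-pairing / genus argument, or directly that $\Pic(\widebar X)[p]$ carries no $\widetilde\F$-rational points since $\widetilde\F$ is a pro-$p$ extension and the Frobenius eigenvalues avoid $1$). Then $\pi_1^\et(X)(p)\cong\Z_p$, which is free pro-$p$ of cohomological dimension $1$, so $H^i=0$ for $i\geq 2$; but $X$ itself has $H^2(X,\F_p)=H^2(\widebar X,\F_p)^{G_\F}\neq 0$ (trivial action), so $\phi_{2,\F_p}$ is not surjective and $X$ is \emph{not} a $K(\pi,1)$ for $p$, consistent with the statement. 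The finiteness and vanishing of $H^i(\pi_1^\et(X)(p))$ for $i>1$ (a fortiori for $i>3$) is then immediate from $\pi_1^\et(X)(p)\cong\Z_p$.

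I expect the main obstacle to be the passage from abelianized to full pro-$p$ fundamental group in the excluded case --- i.e. proving $\pi_1^\et(X\times_\F\widetilde\F)(p)=1$ rather than merely that its abelianization vanishes. The clean way is an inductive argument along the $\Z_p$-tower $X_n=X\times_\F\F_{p^{np^{\,?}}}$ showing $\Pic(X_n)[p]=0$ persists (equivalently, $1$ is never a Frobenius eigenvalue on the Tate module mod $p$, because $\widetilde\F/\F$ is pro-$p$ while the eigenvalues have absolute value $(\#\F)^{1/2}$), hence $H^1(X_n,\F_p)=\Z/p$ for every $n$, hence every finite pro-$p$ covering of $X\times_\F\widetilde\F$ is trivial. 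The other delicate point is keeping track of which cohomology classes are actually \emph{gained} versus merely inflated when one goes up the tower, since $\phi_{2}$ being injective (Lemma~\ref{kpi}) already pins down a lot; the Kummer sequence $0\to\Pic[p]\to H^1\to(\text{units})\to 0$ and Poincaré duality on each proper $Y$ organize this bookkeeping.
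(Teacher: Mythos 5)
Your overall architecture matches the paper's: reduce to $\widetilde X=X\times_\F\widetilde\F$, dispose of the case $\mu_p(\F)=1$ via $H^2(\widetilde X)=H^2(\widebar X)^{G(\widebar\F/\widetilde\F)}=\mu_p(\F)^*=0$, and, when $\mu_p\subset\F$, reduce the $K(\pi,1)$-property to the infiniteness of $\pi_1^\et(\widetilde X)(p)$ because the restriction maps on $H^2$ along the pro-$p$ tower are multiplication by the degree. The gap is at the crux: you never actually prove that $\Pic(X)[p]\neq 0$ forces $\pi_1^\et(\widetilde X)(p)$ to be infinite, and the reason you offer --- ``the positivity of the genus contribution is what makes the tower infinite'' --- cannot be right, since in the complementary case $\Pic(X)[p]=0$ the geometric pro-$p$ fundamental group is trivial no matter how large $g_X$ is. What is needed is a statement about the Frobenius action on the Tate module: writing $T=T_p(\widebar X)\cong\Z_p^{2g}$, one has $\pi_1^\ab(\widetilde X)(p)\cong T_{G(\widebar\F/\widetilde\F)}$ and $\pi_1^\ab(\widetilde X)/p\cong (T/p)_{G(\widebar\F/\widetilde\F)}$; the group $G(\widebar\F/\widetilde\F)$ is pro-cyclic of supernatural order prime to $p$ and acts through a finite cyclic group of order prime to $p$ (the kernel of $\Gl_{2g}(\Z_p)\to\Gl_{2g}(\F_p)$ being pro-$p$); and for such an action $(T/p)_G\neq 0$ implies that $T_G\cong T^G$ is a nonzero free $\Z_p$-module, hence infinite. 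This coinvariants lemma (a Maschke/Tate-cohomology argument) is the missing ingredient; without it the implication $\Pic(X)[p]\neq0\Rightarrow X$ is a $K(\pi,1)$ remains unproved.

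A second, smaller problem sits in the excluded case $\mu_p\subset\F$, $\Pic(X)[p]=0$. Your justification that $p$-torsion stays invisible up the constant tower --- that ``the Frobenius eigenvalues avoid $1$ because they have absolute value $(\#\F)^{1/2}$'' --- proves nothing modulo $p$: Weil numbers of absolute value $\sqrt q$ can perfectly well be congruent to $1$ modulo a prime above $p$ (that is exactly what happens when $\Pic(X)[p]\neq0$). The correct argument is the one you half-gesture at: $\Pic(\widetilde X)[p]$ is a finite $p$-group with an action of the pro-$p$ group $G(\widetilde\F/\F)$, so if it were nonzero its invariants $\Pic(X)[p]$ would be nonzero. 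Also, no induction up the tower is needed to pass from the abelianization to the full group: a pro-$p$ group with trivial Frattini quotient is trivial, so $\pi_1^\ab(\widetilde X)/p=0$ already gives $\pi_1^\et(\widetilde X)(p)=1$ and hence $\pi_1^\et(X)(p)\cong G(\widetilde\F/\F)\cong\Z_p$. Your derivation of the failure of the $K(\pi,1)$-property in this case ($H^2(X)\neq0$ while $\cd\Z_p=1$) is fine.
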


\section{Computation of  \'{e}tale cohomology groups}

\begin{prop}[Local computation]\label{loc}
 Let $K$ be a nonarchimedean local (or henselian) field of characteristic $p$.  Let $Y=Spec\,\mathcal{O}_K$, $y\in Y$  the closed point and let $T$  be $\emptyset$ or $\{y\}$. Then the local cohomology groups $H^i_y(Y,T)$ vanish for $i\neq 2$ and
\[
H_y^2(Y,T)=\begin{cases}
H^1_{/nr}(K)\text{ if }T=\emptyset\\
H^1(K)\ \text{ if } T=\{y\},                                                                                                                                                                                                                                                                                                                                                                                                                                                                           \end{cases}
\]
where $H^1_{/nr}=H^1(K)/H^1_{nr}(K)$.
\end{prop}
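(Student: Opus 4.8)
The plan is to run the excision (localization) long exact sequence attached to the closed point $y\hookrightarrow Y$ with open complement $Y-y=\mathrm{Spec}\,K$. This sequence is available for the marked site because the marking $T$ is contained in the closed subset $\{y\}$, while the open part $\mathrm{Spec}\,K$ carries the empty marking (it does not meet $T$). Writing $H^i(K)$ for the \'etale (= Galois) cohomology of $\mathrm{Spec}\,K$ with $\F_p$-coefficients — which does not depend on $T$, since $y\notin\mathrm{Spec}\,K$ — we obtain
\[
\cdots\to H^i_y(Y,T)\to H^i(Y,T)\to H^i(K)\to H^{i+1}_y(Y,T)\to\cdots,
\]
and the whole computation reduces to the two outer terms.

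First I would compute $H^i(Y,T)$. Since $\mathcal{O}_K$ is a henselian discrete valuation ring, \'etale cohomology of $Y$ coincides with that of the residue field $k(y)$. For $T=\emptyset$ this gives $H^0(Y,\emptyset)=\F_p$, and $H^1(Y,\emptyset)$ maps isomorphically onto $H^1_{nr}(K)\subset H^1(K)$ via the inflation map along $G_K\twoheadrightarrow G_{k(y)}$ (inflation being injective on $H^1$, with image $H^1_{nr}(K)$ by definition), while $H^i(Y,\emptyset)=0$ for $i\ge 2$. For $T=\{y\}$, any connected finite \'etale cover of the henselian local scheme $Y$ in which $y$ splits completely is trivial — such a cover is determined by the induced residue extension, and "completely split'' forces that extension to be trivial — hence $\pi_1(Y,\{y\})=1$, so $H^0(Y,\{y\})=\F_p$ and $H^i(Y,\{y\})=0$ for $i\ge 1$. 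It is convenient to record both cases uniformly: inside $H^1(K)$, the image of $H^1(Y,T)$ equals $H^1_{nr}(K)$ if $T=\emptyset$ and equals $0$ if $T=\{y\}$.

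The second ingredient is the characteristic-$p$ input: since $\mathrm{char}(K)=p$, both $G_K$ and $G_{k(y)}$ have $p$-cohomological dimension $\le 1$, so $H^i(K)=0$ for all $i\ge 2$ (and $H^i(k(y))=0$ for $i\ge 2$, as already used). Feeding this into the long exact sequence: at the bottom, $0\to H^0_y(Y,T)\to H^0(Y,T)\to H^0(K)$ has last arrow an isomorphism of one-dimensional $\F_p$-spaces, so $H^0_y(Y,T)=0$ and $H^1_y(Y,T)$ injects into $H^1(Y,T)$; composing with $H^1(Y,T)\to H^1(K)$, which is injective in both cases (inflation from $k(y)$ when $T=\emptyset$, zero source when $T=\{y\}$), yields $H^1_y(Y,T)=0$. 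Next, $H^2(Y,T)=0$ forces $H^2_y(Y,T)=\mathrm{coker}(H^1(Y,T)\to H^1(K))$, which is $H^1(K)/H^1_{nr}(K)=H^1_{/nr}(K)$ if $T=\emptyset$ and $H^1(K)$ if $T=\{y\}$. Finally, for $i\ge 2$ the sequence gives $H^{i+1}_y(Y,T)\cong H^i(K)=0$, so $H^j_y(Y,T)=0$ for $j\ge 3$. This establishes all assertions.

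I expect the main obstacle to be foundational bookkeeping rather than calculation: one must check that the excision sequence and the identification "\'etale cohomology of a henselian local scheme $=$ cohomology of its residue field'' are both valid in the marked framework of \cite{Sch3}, and in particular that for $\mathrm{Spec}$ of a henselian local ring the marked cohomology is computed by the marked fundamental group (so that $\pi_1(Y,\{y\})=1$ really forces the higher groups to vanish). The only genuinely arithmetic point is the vanishing $H^i(K)=0$ for $i\ge 2$, which is special to equal characteristic $p$ and is exactly what concentrates the local cohomology in degree $2$.
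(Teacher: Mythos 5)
Your argument is correct and follows essentially the same route as the paper: the excision sequence for $y\in Y$ with open complement $\mathrm{Spec}\,K$, the identification $H^i(Y,T)$ with the (un)marked cohomology of the henselian local scheme (residue field cohomology for $T=\emptyset$, vanishing in positive degrees for $T=\{y\}$), and $\cd_p K=1$ to kill $H^i(K)$ for $i\ge 2$. The only cosmetic difference is that the paper deduces the injectivity of $H^1(Y,T)\to H^1(Y-\{y\})$ directly from normality of $Y$, whereas you get it from your explicit description of $H^1(Y,T)$ inside $H^1(K)$; both are fine.
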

\begin{preuve}
We use the excision sequence:
\[
\cdots\to H_y^i(Y,T)\to H^i(Y,T)\to H^i(Y-\{y\})\to H_y^{i+1}(Y,T)\to \cdots.
\]
Since $Y$ is henselian, $H^i(Y)\cong H^i(y)=H_{nr}^i(K)$, hence  $H^i(Y)=0$ for $i\geq 2$. Since $Y$ is normal, $H^1(Y,T)\to H^1(Y-\{y\})$ is injective, hence $H_y^1(Y,T)=0$. Furthermore, $H^i(Y-\{y\})=H^i(K)$ and this group vanishes for $i\ge 2$ since $cd_pK=1$ (see \cite{NSW}, Cor.~6.1.3). It follows that $H^i_{y}(Y,T)\cong H^i(Y,T)$ for $i\geq 3$.

For $T=\emptyset$ we obtain $H^i_{y}(Y)=0$ for $i\geq 3$ and the short exact sequence
\[
0\to H^1(Y)\to H^1(Y-\{y\})\to H_y^{2}(Y)\to 0
\]
implies the result for $H^2_y(Y)$.

If $T=\{y\}$, the identity of $(Y,T)$ is cofinal among the covering families of $(Y,T)$, hence $H^i(Y,\{y\})=0$ for $i\geq 1$.
We obtain $H^2_{y}(Y,\{y\})\cong H^1(K)$ and $H^i_y(Y,\{y\})=0$ for $i\geq 3$.
\end{preuve}

\begin{prop}(Global computation)\label{GC}
Let $X$ be a smooth projective  and geometrically irreducible curve over $\F$,  $k=\F(X)$ and  $S$ and $T$ finite, disjoint sets of closed points of~$X$.

\medskip
Then $H^i(X-S,T)=0$ for $i\geq 3$ and $H^2(X-S,T)= 0$ if $S\ne \emptyset$.
We have an exact sequence
\[
0\to H^1(X-S,T)\to H^1(X-S)\to \bigoplus_{x\in T}H^1_{nr}(k_x)\to H^2(X-S,T)\to H^2(X-S)\to 0.
\]
\end{prop}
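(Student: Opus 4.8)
The plan is to reduce the global statement to the local computation of Proposition~\ref{loc} via the excision sequence comparing $(X-S,T)$ with $(X-S)$ together with a localization at the finitely many points of $T$. Writing $U=X-S$, I would use the excision (localization) sequence for the closed subscheme $T\subset U$,
\[
\cdots\to \bigoplus_{x\in T} H^i_x(U_x,T_x)\to H^i(U,T)\to H^i(U-T)\to \bigoplus_{x\in T} H^{i+1}_x(U_x,T_x)\to\cdots,
\]
where the local groups are computed after passing to henselizations (excision), so that $H^i_x(U_x,T_x)=H^i_x(\mathrm{Spec}\,\mathcal{O}_{k_x},\{x\})$ in Proposition~\ref{loc}'s notation. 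Here I am using that $U-T=U$ away from $T$, i.e. that removing $T$ from the marked curve $(U,T)$ gives the same open curve as removing $T$ from $U$ (the marking becomes vacuous once the marked points are deleted). By Proposition~\ref{loc}, $H^i_x(U_x,\{x\})=0$ for $i\neq 2$ and $H^2_x(U_x,\{x\})=H^1(k_x)$. Plugging this in, the long exact sequence degenerates: $H^i(U,T)\cong H^i(U-T)$ for $i\geq 4$, and we get an exact sequence
\[
0\to H^1(U,T)\to H^1(U-T)\to \bigoplus_{x\in T} H^1(k_x)\to H^2(U,T)\to H^2(U-T)\to H^3(U,T)\to H^3(U-T)\to 0.
\]

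Next I would feed in what is already known about the open curve $U-T=X-(S\cup T)$. Since $S\cup T\ne\emptyset$ (as $T$ is nonempty here; the case $T=\emptyset$ being trivial, $U-T=U$), the curve $X-(S\cup T)$ is affine, so $H^i_\et(X-(S\cup T),\F_p)=0$ for $i\geq 2$ by Artin--Schreier theory / the fact that an affine curve over a field of characteristic $p$ has $p$-cohomological dimension $1$ (cf. \cite{NSW}). Thus $H^2(U-T)=H^3(U-T)=0$, and the six-term sequence collapses to
\[
0\to H^1(U,T)\to H^1(U-T)\to \bigoplus_{x\in T} H^1(k_x)\to H^2(U,T)\to 0,
\]
together with $H^i(U,T)=0$ for $i\geq 3$. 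This already proves the vanishing $H^i(X-S,T)=0$ for $i\geq 3$. It remains to replace $H^1(U-T)=H^1(X-(S\cup T))$ and the local terms $H^1(k_x)$ by $H^1(X-S)=H^1(U)$ and $H^1_{nr}(k_x)$, and to handle the claim $H^2(U,T)=0$ when $S\ne\emptyset$.

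For the comparison between $U-T$ and $U$, I would run the excision sequence once more, this time for $(U,\emptyset)$ versus $(U-T)$ (the classical unmarked localization sequence), using the $T=\emptyset$ half of Proposition~\ref{loc}: $H^i_x(U_x)=0$ for $i\neq 2$ and $H^2_x(U_x)=H^1_{/nr}(k_x)=H^1(k_x)/H^1_{nr}(k_x)$. This yields
\[
0\to H^1(U)\to H^1(U-T)\to \bigoplus_{x\in T} H^1_{/nr}(k_x)\to H^2(U)\to H^2(U-T)=0,
\]
so $H^1(U-T)/H^1(U)\hookrightarrow \bigoplus_x H^1_{/nr}(k_x)$ with cokernel $H^2(U)$. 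Splicing the two four-term sequences along the common group $H^1(U-T)$ and chasing through the quotient $H^1(k_x)/H^1_{/nr}(k_x)=H^1_{nr}(k_x)$, I obtain exactly the asserted exact sequence
\[
0\to H^1(U,T)\to H^1(U)\to \bigoplus_{x\in T} H^1_{nr}(k_x)\to H^2(U,T)\to H^2(U)\to 0.
\]
Finally, if $S\ne\emptyset$ then $U=X-S$ is already affine, so $H^2(U)=0$ by the same $\cd_p\le 1$ argument, and then the surjection $H^2(U,T)\twoheadrightarrow H^2(U)$ forces... no: we need injectivity. Rather, when $S\ne\emptyset$ the map $H^1(U)\to\bigoplus_x H^1_{nr}(k_x)$ is surjective — this is the real content and the main obstacle. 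I expect to prove it either by a direct Riemann--Roch/Artin--Schreier computation (realizing unramified-at-$x$ classes globally by functions with prescribed behaviour, using that one may allow poles along the nonempty set $S$), or by invoking the known structure of $G_S(k)$ and a strong approximation argument; granting this surjectivity, exactness gives $H^2(U,T)=0$, completing the proof.
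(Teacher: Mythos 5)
Your proposal follows essentially the same route as the paper: the two excision sequences for $(X-S,T)$ and for $(X-S)$ relative to $X-(S\cup T)$, fed with the local computation of Proposition~\ref{loc} and the vanishing of $H^i$, $i\ge 2$, for an affine curve in characteristic $p$, then spliced together to give the five-term sequence. The one step you leave open --- the surjectivity needed to get $H^2(X-S,T)=0$ when $S\ne\emptyset$ --- is exactly where the paper invokes the Strong Approximation Theorem (\cite{NSW}, Thm.~9.2.5), applied to the map $H^1(X-(S\cup T))\to\bigoplus_{x\in T}H^1(k_x)$ in the first excision sequence, i.e.\ the second of the two options you suggest.
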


\begin{preuve}
In the case $T=\emptyset$ we have $H^i(X-S)=0$ for $i\geq 3$ and $H^2(X-S)=0$ if $S\neq \emptyset$ by \cite{SGA4.3} exp.\ 10, Thm.\ 5.1 and  Cor.\ 5.2. Moreover, the sequence is exact for trivial reasons.

Now assume $T\neq \emptyset$.
Consider the excision sequence for $(X-S,T)$ and $(X-(S\cup T))$:
\[
\dots\to \bigoplus_{x\in T}H_x^i((X-S)_x,T_x)\to H^i(X-S,T)\to H^i(X-(S\cup T))\to\cdots .
\]
Proposition \ref{loc} shows that $H^i(X-S,T)\cong H^i(X-(S\cup T))=0$ for $i\geq 3$ and the exactness of the sequence
\[
0\to H^1(X-S,T)\to H^1(X-(S\cup T))\to \bigoplus_{x\in T}H^1(k_x)\to H^2(X-S,T)\to 0.\eqno (*)
\]
\medskip
Comparing this with the excision sequence for $(X-S)$ and $(X-(S\cup T))$
\[
0\to H^1(X-S)\to H^1(X-(S\cup T))\to \bigoplus_{x\in T}H^1_{/nr}(k_x)\to H^2(X-S)\to 0,
\]
we obtain the exact sequence of the proposition.

If $S\neq \emptyset$, the Strong Approximation Theorem implies that
\[
H^1(X-(S\cup T))\longrightarrow \bigoplus_{x\in T}H^1(k_x)
\]
is surjective (see \cite{NSW} Thm.~9.2.5). Using $(*)$ this shows that $H^2(X-S,T)=0$ in this case.
\end{preuve}

\begin{coro} \label{notkpi1}
If $G_S^T(k)$ is finite and nontrivial, then $(X-S,T)$ does not have the $K(\pi,1)$-property for $p$.
\end{coro}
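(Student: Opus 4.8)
The plan is to argue by contradiction, extracting a bound on the $p$-cohomological dimension of $G_S^T(k)$ from Proposition~\ref{GC}. Suppose that $(X-S,T)$ has the $K(\pi,1)$-property for $p$ while $G:=G_S^T(k)=\pi_1(X-S,T)(p)$ is finite and nontrivial. By the definition of the $K(\pi,1)$-property the edge homomorphism $\phi_{3,\F_p}\colon H^3(G)\to H^3(X-S,T)$ is an isomorphism, and Proposition~\ref{GC} (applicable, since $X$ is smooth, projective and absolutely irreducible over $\F$) gives $H^3(X-S,T)=0$. Hence $H^3(G)=0$. Since $k_S^T/k$ is by construction a pro-$p$-extension, $G$ is a pro-$p$-group, and for a pro-$p$-group the $p$-cohomological dimension can be read off from $\F_p$-cohomology: $H^3(G)=0$ already forces $\cd_p G\le 2$ (see \cite{NSW}, §3.3). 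Alternatively one can avoid this: from the $\F_p$-case of Proposition~\ref{GC}, a one-step dévissage along a composition series (whose graded pieces are copies of $\F_p$ with trivial action, as $G$ is a $p$-group) together with compatibility of cohomology with filtered colimits shows $H^i(X-S,T,M)=0$ for $i\ge 3$ and every discrete $p$-torsion $G$-module $M$, and then $\cd_p G\le 2$ follows because $\phi_{i,M}$ is an isomorphism for all $M$.

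This is the contradiction: $G$ is a finite pro-$p$-group, hence a finite $p$-group, and being nontrivial it contains a subgroup isomorphic to $\Z/p\Z$, which has infinite $p$-cohomological dimension; since $\cd_p$ does not increase on passing to closed subgroups, $\cd_p G=\infty$, contradicting $\cd_p G\le 2$. Therefore $(X-S,T)$ cannot have the $K(\pi,1)$-property for $p$, which is the assertion of the corollary.

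I do not expect a genuine obstacle here: once Proposition~\ref{GC} is available the statement is essentially formal, the only point requiring mild care being the passage from the vanishing of $H^3$ with $\F_p$-coefficients to the inequality $\cd_p G\le 2$, which is why I prefer to phrase the argument through cohomological dimension of pro-$p$-groups. For comparison, one could instead try to argue straight from Lemma~\ref{kpi}: if $G$ is finite, the universal pro-$p$-covering is itself a finite marked curve $(Y_0,T_0')$ which is cofinal in the defining direct system, so the $K(\pi,1)$-property would be equivalent to the vanishing of $H^i(Y_0,T_0')$ for all $i\ge 1$; but Proposition~\ref{GC} only guarantees this for $i\neq 2$, so that route would still leave one to analyse $H^2(Y_0,T_0')$, whereas the cohomological-dimension argument above bypasses that entirely.
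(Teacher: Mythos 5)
Your proof is correct and is in substance the paper's own argument: Proposition~\ref{GC} gives $H^i(X-S,T)=0$ for $i\ge 3$, while a nontrivial finite $p$-group has $\cd_p=\infty$, so the edge maps cannot all be isomorphisms. The paper states this in one line; your extra care in passing from $H^3(G,\F_p)=0$ to $\cd_p G\le 2$ via the standard pro-$p$ criterion is a valid (and slightly more explicit) way of saying the same thing.
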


\begin{proof}
In this case we have $\cd  G_S^T(k)=\infty$ but $H^i(X-S,T)=0$ for $i\ge 3$.
\end{proof}

\begin{coro} \label{EP} We have the Euler-Poincar\'{e} characteristic formula
\[
\sum_{i=0}^2 (-1)^i \dim_{\F_p} H^i(X,T)= \#T.
\]
\end{coro}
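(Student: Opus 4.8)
The plan is to combine the cohomological vanishing and the exact sequence from Proposition~\ref{GC} with the known Euler--Poincar\'e characteristic for the unmarked curve, and then account for the correction terms coming from the marking set $T$. Concretely, I would take $S=\emptyset$ in Proposition~\ref{GC}, so that $H^i(X,T)=0$ for $i\ge 3$ and the sequence
\[
0\to H^1(X,T)\to H^1(X)\to \bigoplus_{x\in T}H^1_{nr}(k_x)\to H^2(X,T)\to H^2(X)\to 0
\]
is exact. Since the alternating sum of dimensions in an exact sequence of finite-dimensional $\F_p$-vector spaces vanishes, this gives
\[
\dim H^1(X,T)-\dim H^2(X,T) = \dim H^1(X)-\dim H^2(X)-\sum_{x\in T}\dim H^1_{nr}(k_x).
\]
Adding $\dim H^0(X,T)=\dim H^0(X)=1$ to both sides, the left-hand side becomes $\sum_{i=0}^2(-1)^i\dim H^i(X,T)$, so it remains to evaluate the right-hand side.

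For the unmarked curve $X$ over $\F$, I would invoke the \'etale Euler--Poincar\'e characteristic for the constant sheaf $\F_p$ on a smooth proper curve in characteristic $p$. Here one must be careful: the Artin--Schreier sequence $0\to\F_p\to\mathcal O_X\xrightarrow{F-1}\mathcal O_X\to0$ shows that $H^i(X,\F_p)$ is \emph{not} governed by the genus but rather by the $\F_p$-dimensions of $H^i(X,\mathcal O_X)$ together with Frobenius, and in fact the relevant identity (already implicit in the references \cite{SGA4.3} cited in Proposition~\ref{GC}, or derivable from the Artin--Schreier sequence and the long exact $G_\F$-cohomology sequence over $\widebar\F$) yields $\sum_{i=0}^2(-1)^i\dim_{\F_p}H^i(X,\F_p)=0$. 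The cleanest route is: $H^i(\widebar X,\F_p)$ has dimension $1,\,2g_X^{(p)},\,1$ in degrees $0,1,2$ only in the ordinary case, but in general the alternating sum over $\widebar X$ is $\chi(\widebar X,\mathcal O_{\widebar X})\cdot 0$-type cancellation; since $G_\F\cong\widehat\Z$ has cohomological dimension $1$, the Hochschild--Serre spectral sequence $H^a(G_\F,H^b(\widebar X,\F_p))\Rightarrow H^{a+b}(X,\F_p)$ contributes each $H^b(\widebar X,\F_p)$ twice with opposite signs to the alternating sum, forcing $\sum_i(-1)^i\dim H^i(X,\F_p)=0$.

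For the local terms, I would compute $\dim_{\F_p}H^1_{nr}(k_x)$. Since $k_x$ is a local field of characteristic $p$ with residue field $k(x)=\F_{q^{\deg x}}$, we have $H^1_{nr}(k_x)=H^1(\F_{q^{\deg x}},\F_p)=\Hom(G_{\F_{q^{\deg x}}},\F_p)$, which is one-dimensional over $\F_p$ (as $G_{\F_{q^{\deg x}}}\cong\widehat\Z$ has a unique $\F_p$-quotient, $p$ being the characteristic). Hence $\sum_{x\in T}\dim H^1_{nr}(k_x)=\#T$. Substituting into the displayed identity gives $\sum_{i=0}^2(-1)^i\dim H^i(X,T)=0-(-\#T)=\#T$, as claimed.

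I expect the main obstacle to be pinning down the unmarked Euler--Poincar\'e characteristic $\sum_i(-1)^i\dim_{\F_p}H^i(X,\F_p)=0$ cleanly in the wildly-ramified characteristic-$p$ setting, since the naive "$2-2g_X$" reflex is wrong here and one genuinely needs either the Artin--Schreier argument or the $G_\F$-spectral-sequence cancellation sketched above; once that is in place, the local computation and the diagram chase are routine. If the authors prefer, the entire corollary can instead be deduced directly by running the Euler characteristic through the excision sequence relating $(X,T)$ to $X-T$ and using that $\chi(X-T)=\chi(X)-\#T\cdot\chi(\mathrm{pt})$-type bookkeeping, but the Hochschild--Serre route above is the most self-contained given what has already been established.
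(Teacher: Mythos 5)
Your proposal is correct and follows the same skeleton as the paper's proof: take the alternating sum of dimensions along the exact sequence of Proposition~\ref{GC} with $S=\emptyset$, observe that each $H^1_{nr}(k_x)=\Hom(G_{k(x)},\F_p)$ is one-dimensional so the local terms contribute $\#T$, and reduce to showing $\sum_{i=0}^2(-1)^i\dim_{\F_p}H^i(X,\F_p)=0$. The only divergence is in that last step: the paper identifies $H^1(\widebar X)$ with $\Hom(\Pic(\widebar X)[p],\F_p)$, writes $H^2(X)$ as coinvariants and the cokernel of $H^1(\F)\to H^1(X)$ as invariants, and invokes Lemma~\ref{invconv}; you instead run the Hochschild--Serre spectral sequence for $\widebar X/X$ directly, using that $\cd G_\F=1$ makes it degenerate into two rows. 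That is a slightly cleaner and more general route (it needs no identification of $H^1(\widebar X)$ with the $p$-torsion of the Picard group), but note that your phrase ``contributes each $H^b(\widebar X,\F_p)$ twice with opposite signs'' silently uses that $\dim H^0(G_\F,M)=\dim H^1(G_\F,M)$ for a finite $G_\F$-module $M$, i.e.\ that $\ker(1-\Frob)$ and $\coker(1-\Frob)$ have equal dimension; this is exactly the content of the four-term sequence in the paper's Lemma~\ref{invconv} and should be stated rather than implied. Your worry about the ``$2-2g_X$ reflex'' is well placed but immaterial here, since neither argument ever needs the individual dimensions of $H^b(\widebar X,\F_p)$.
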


\begin{proof} If $S=\emptyset$, all groups in the  exact sequence of Proposition \ref{GC} are finite and we obtain
\[
\sum_{i=0}^2 (-1)^i \dim_{\F_p} H^i(X,T) = \# T + \sum_{i=0}^2 (-1)^i \dim_{\F_p} H^i(X).
\]
Recall that $H^1(\widebar X)=\Hom(\Pic(\widebar X)[p],\F_p)$ (every connected \'{e}tale covering of $\widebar X$ comes by base change from an isogeny of the Jacobian of $\widebar X$). Hence
\[
H^2(X)= H^1(\F,H^1(\widebar X))= H^1(\F,\Hom(\Pic(\widebar X)[p],\F_p)) =\Hom(\Pic(\widebar X)[p],\F_p)_{G_\F}.
\]
Furthermore, we have an exact sequence
\[
0\to H^1(\F) \to H^1(X) \to \Hom(\Pic(\widebar X)[p], \F_p)^{G_\F} \to 0.
\]
Thus Lemma~\ref{invconv} below shows $$\sum_{i=0}^2 (-1)^i \dim_{\F_p} H^i(X)=1-\dim_{\F_p} H^1(\F)=0.$$
\end{proof}
\begin{lemm}\label{invconv}
We have $\Pic(\widebar X)[p]^{G_\F}= \Pic(X)[p]$ and
\[
\dim_{\F_p} \Pic(\widebar X)[p]_{G_\F}= \dim_{\F_p}\Pic(X)[p].
\]
\end{lemm}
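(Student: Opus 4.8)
The statement splits into two parts: the identification of the $G_\F$-invariants of $\Pic(\widebar X)[p]$ with $\Pic(X)[p]$, and the equality of $\F_p$-dimensions of invariants and coinvariants. The plan is to exploit the fact that $G_\F\cong\widehat{\Z}$ is a free profinite group of rank $1$, so that for any finite $G_\F$-module $A$ one has $\dim_{\F_p}A^{G_\F}=\dim_{\F_p}H^0(G_\F,A)=\dim_{\F_p}H^1(G_\F,A)=\dim_{\F_p}A_{G_\F}$, the first and last equalities because taking invariants (resp.\ coinvariants) is the kernel (resp.\ cokernel) of $F-1$ on the finite group $A$ for $F$ a topological generator, and these have equal cardinality. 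Applying this with $A=\Pic(\widebar X)[p]$ immediately gives the dimension equality in the second assertion.

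For the first assertion, I would use the Hochschild--Serre spectral sequence (or just the low-degree exact sequence) for $\widebar X\to X$ with coefficients in $\mathbb{G}_m$, or more directly the Kummer sequence. From $0\to\mu_p\to\mathbb{G}_m\xrightarrow{p}\mathbb{G}_m\to 0$ on $X$ (noting $p=\mathrm{char}\,\F$ forces care — actually here one should instead use that $\Pic(\widebar X)[p]=J_X(\widebar\F)[p]$ where $J_X$ is the Jacobian, an abelian variety over $\F$, and $\Pic(X)[p]=J_X(\F)[p]$ since the degree-$0$ part of $\Pic$ is represented by $J_X$ and $X$ has an $\F$-rational divisor class of degree $1$... or one argues via $H^1_{\et}$). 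Concretely: the exact sequence $0\to\Pic(X)\to\Pic(\widebar X)^{G_\F}\to \mathrm{Br}(\F)$ shows $\Pic(X)\to\Pic(\widebar X)^{G_\F}$ is injective with cokernel killed by the degree map, and since $\mathrm{Br}(\F)=0$ it is in fact an isomorphism onto $\Pic(\widebar X)^{G_\F}$. Taking $p$-torsion (a left-exact operation) then yields $\Pic(X)[p]=(\Pic(\widebar X)^{G_\F})[p]=(\Pic(\widebar X)[p])^{G_\F}$, which is exactly the claim.

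I would organize the write-up as: (1) recall $\mathrm{Br}(\F)=0$ and the exact sequence relating $\Pic(X)$ and $\Pic(\widebar X)^{G_\F}$, deduce they are equal; (2) pass to $p$-torsion to get the first identity; (3) invoke the $\widehat{\Z}$-cohomology dimension-counting argument for the second. The main obstacle — really the only subtle point — is justifying $\Pic(X)\xrightarrow{\sim}\Pic(\widebar X)^{G_\F}$ cleanly; one must be slightly careful because a priori the Hochschild--Serre sequence only gives $\Pic(\widebar X)^{G_\F}/\Pic(X)\hookrightarrow H^2(G_\F,\widebar\F^\times)=\mathrm{Br}(\F)$, so the argument genuinely relies on the vanishing of the Brauer group of a finite field (equivalently, on Lang's theorem / the triviality of $H^2$ of $\widehat{\Z}$ with coefficients in $\widebar\F^\times$). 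Everything else is formal.
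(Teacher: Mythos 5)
Your proposal is correct and follows essentially the same route as the paper: the first identity via the Leray/Hochschild--Serre spectral sequence for $\Gm_m$ together with $H^2(\F,\widebar\F^\times)=0$, followed by passing to $p$-torsion, and the second identity by comparing the kernel and cokernel of $1-\Frob$ acting on the finite group $\Pic(\widebar X)[p]$. The only cosmetic difference is that you phrase the dimension count through $\widehat{\Z}$-cohomology, whereas the paper writes down the four-term exact sequence directly; these are the same argument.
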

\begin{proof}
The first equality follows from the Leray spectral sequence
\[
E_2^{ij}=H^i(\F,H^j(\widebar X,\Gm_m)) \Rightarrow H^{i+j} (X,\Gm_m)
\] and the vanishing of the Brauer group of a finite field:
\[
H^2(\F, H^0(\widebar X,\Gm_m))= H^2(\F, \widebar \F^\times)=0.
\] The equality of dimensions  follows from the exact sequence of finite-dimensional $\F_p$-vector spaces
\[
0 \to  \Pic(\widebar X)[p]^{G_\F} \to \Pic(\widebar X)[p] \stackrel{1-\Frob}{\longrightarrow}  \Pic(\widebar X)[p] \to
\Pic(\widebar X)[p]_{G_\F} \to 0.
\]
\end{proof}

\section{Proof of Theorem~\ref{main}}
Assume $S\neq \emptyset$. From the computations in the last section, we know that $H^i(X-S,T)=0$ for $i\ge 2$. By Lemma~\ref{kpi}, $(X-S,T)$ has the $K(\pi,1)$-property for~$p$ and $\cd G_S^T(k)\le 1$.
But $G_S^T(k)$ is nontrivial, which follows from the exact sequence
\[
0\to H^1(X-S,T)\to H^1(X-S)\to \bigoplus_{x\in T}H^1_{nr}(k_x)\to 0
\]
together with the fact that $\bigoplus_{x\in T}H^1_{nr}(k_x)$ has finite $\F_p$-dimension whereas $H^1(X-S)$ is infinite dimensional.

\bigskip
Now assume that $S=\emptyset$ and $T=\emptyset$.  Let $\widetilde \F$ be the maximal $p$-extension of $\F$ in $\widebar \F$. Then $H^2(X_{\widetilde \F})=H^2(X_{\widebar \F})^{\Gal(\widebar \F/\widetilde \F)}=0$. Hence $X_{\widetilde \F}$ is a $K(\pi,1)$ for $p$ and the Hochschild-Serre spectral sequence for $X_{\widetilde \F}/X$ shows the same for $X$.
This finishes the proof of Theorem~\ref{main}.

\section{Proof of Theorem \ref{th2}}

\begin{prop}\label{finite}
Assume that $\Pic(X)[p]=0$ and $T\ne \emptyset$ and let $p^r$ be the maximal $p$-power dividing $gcd(\deg x,\ x\in T)$. Then
\[
G^T(k)=\pi_1(X,T)(p)\cong\Gal(\F'/\F),
\]
where $\F'$ is the unique extension of\/ $\F$ of degree $p^r$.
\end{prop}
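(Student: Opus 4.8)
The plan is to compute the abelianization $G^T(k)^{\ab} = \pi_1(X,T)(p)^{\ab}$ via class field theory and then show the group is abelian (in fact pro-cyclic), so that no Frattini argument is needed beyond the first step. Concretely, $\pi_1(X,T)(p)^{\ab}$ classifies the maximal abelian pro-$p$-extension of $k$ unramified everywhere in which all places of $T$ split completely; by global class field theory for the function field $k$ this quotient is the maximal pro-$p$-quotient of $\Pic(X)/\langle x : x\in T\rangle$, or more precisely of the idele class group cut out by the trivial modulus and the splitting condition at $T$. First I would write down the degree map $\deg\colon \Pic(X)\to\Z$ and recall that its kernel $\Pic^0(X)$ is finite of order prime to... no: rather, that $\Pic^0(X)(\F)=\Pic(X)[\text{torsion}]$ has $p$-part $\Pic(X)[p^\infty]$, which by the hypothesis $\Pic(X)[p]=0$ vanishes. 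Hence the $p$-completion of $\Pic(X)$ is just $\Z_p$ via the degree map, and imposing that all $x\in T$ split completely kills the subgroup generated by their classes, i.e. kills $p^r\Z_p$ where $p^r$ is the maximal $p$-power dividing $\gcd(\deg x: x\in T)$. This shows $\pi_1(X,T)(p)^{\ab}\cong \Z/p^r = \Gal(\F'/\F)$, the extension being the constant field extension of degree $p^r$ (which visibly is unramified and splits completely exactly at the places whose degree is divisible by $p^r$).

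Next I would promote this from the abelianization to the full group. The cleanest route is the Euler--Poincaré / cohomological dimension bookkeeping already set up in the excerpt: by Corollary \ref{EP} we have $\sum (-1)^i\dim H^i(X,T)=\#T$, and by Proposition \ref{GC} combined with $\Pic(X)[p]=0$ (so $H^2(X)=0$ and $H^1(\widebar X)^{G_\F}=0$, whence $H^1(X)=H^1(\F)$ is one-dimensional) one computes $\dim H^1(X,T)$ and $\dim H^2(X,T)$ explicitly; in particular $\dim H^1(X,T)=\dim H^1(X,T)(p)^{\ab}\otimes\F_p$ should come out to be $1$ if $p\mid\deg x$ for all $x\in T$ and $0$ otherwise—consistent with $\Z/p^r$ for $r\ge 1$ (resp. the trivial group for $r=0$, where instead $T$ forces $k_S^T=k$). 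The point is that $H^1(X,T)=H^1(G^T(k))$ already by Lemma \ref{kpi}, so $G^T(k)$ is generated by $\le 1$ element; a pro-$p$-group generated by one element is pro-cyclic, hence abelian, hence equal to its own abelianization $\Z/p^r$. (When $r=0$ this reads: $H^1(X,T)=0$, so $G^T(k)=1$.)

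Alternatively, and perhaps more transparently, I would exhibit the constant-field extension $k\widetilde\F/k$ directly: it is unramified at every place, and a place $x$ of $k$ splits completely in $k\F_{p^n}$ iff $n\mid\deg x$, so $k\F_{p^r}$ is an unramified $T$-split $\Z/p^r$-extension, giving a surjection $G^T(k)\twoheadrightarrow\Gal(\F_{p^r}/\F)$. Combined with the upper bound $\dim_{\F_p}H^1(G^T(k))=\dim_{\F_p}H^1(X,T)\le 1$ from the cohomology computation, this surjection must be an isomorphism. I expect the main obstacle to be the careful verification that the class-field-theoretic "ray class group" attached to the empty modulus together with the full-splitting condition at $T$ really has $p$-part $\Z_p/p^r\Z_p$ — i.e., pinning down that the only relations are the degree relation and the classes $[x]$, $x\in T$ — and matching this bookkeeping precisely against the $\F_p$-dimension of $H^1(X,T)$ delivered by Proposition \ref{GC}, so that the a priori inequality becomes an equality and the pro-cyclicity conclusion is rigorous.
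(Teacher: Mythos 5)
Your argument is correct, but it reaches the conclusion by a genuinely different route than the paper. The paper never touches the marked abelianization directly: it first shows that the \emph{unmarked} group $\pi_1(X)(p)$ is free pro-$p$ of rank one (from $H^2(X)\cong\Hom(\Pic(X)[p],\F_p)=0$ and $\dim_{\F_p}H^1(X)=1$ via Corollary~\ref{EP}), so that the canonical surjection $\pi_1(X)(p)\twoheadrightarrow\Gal(\widetilde\F/\F)\cong\Z_p$ is an isomorphism; the marked group is then the quotient of $\Gal(\widetilde\F/\F)$ by the splitting condition at $T$, which is pure elementary splitting theory of constant field extensions (a point of degree $d$ splits in the degree-$n$ constant extension iff $n\mid d$), giving $\Gal(\F'/\F)$ with $[\F':\F]=p^r$ on the nose. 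You instead compute the abelianization of the \emph{marked} group by class field theory (correctly: since $\Pic^0(X)$ has trivial $p$-part, the pro-$p$ completion of $\Pic(X)$ is $\Z_p$ via degree, and the Frobenius classes of $x\in T$ generate $p^r\Z_p$), and then use $\dim_{\F_p}H^1(X,T)\le 1$ together with Lemma~\ref{kpi} to see that $G^T(k)$ is one-generated, hence pro-cyclic, hence equal to its abelianization $\Z/p^r$. Your route costs you an explicit appeal to global class field theory that the paper avoids, but buys a computation that works directly with the object of interest; the paper's route is shorter because identifying $\pi_1(X)(p)$ with $\Gal(\widetilde\F/\F)$ makes the $T$-split quotient transparent.

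One caveat: your closing ``alternative'' (surjection $G^T(k)\twoheadrightarrow\Gal(\F'/\F)$ from the constant field extension, plus the bound $\dim_{\F_p}H^1(G^T(k))\le 1$) is \emph{not} sufficient on its own: a pro-cyclic pro-$p$ group surjecting onto $\Z/p^r$ could still be $\Z/p^{r+1}$ or $\Z_p$. To close that version you would still need the upper bound on the abelianization (your class field theory computation), or the paper's observation that $G^T(k)$ is a quotient of $\pi_1(X)(p)\cong\Gal(\widetilde\F/\F)$, which forces it to be a quotient of the constant-field tower. Since your main argument already supplies the exact abelianization, the proof as a whole is complete.
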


\begin{proof} Let $\widetilde \F$ be the maximal $p$-extension of $\F$ in $\widebar \F$. Using Lemma~\ref{invconv}, we have
\[
H^2(X)=H^1(\F,H^1(\widebar X))\cong \Hom(\Pic(X)[p],\F_p)=0
\]
and Corollary \ref{EP} shows that $H^1(X)$ is $1$-dimensional. Hence $\pi_1(X)(p)$ is free of rank~$1$ and therefore the surjection
\[
\pi_1(X)(p) \twoheadrightarrow \Gal(\widetilde \F/\F)
\]
is an isomorphism (cf.\ \cite{NSW}, Prop.~1.6.15). The maximal subextension $\F'/\F$ of $\widetilde \F /\F$ such that all points in $T$ split completely in the base change $X\otimes_{\F}\F' \to X$ is exactly the unique extension of degree~$p^r$ of $\F$.
\end{proof}

\begin{coro}
Assume that $\Pic(X)[p]=0$ and $T\ne \emptyset$. Then $(X,T)$ is a $K(\pi,1)$ for $p$ if and only if\/ $T=\{x\}$ consists of a single point with $p\nmid \deg x$. In this case the fundamental group $\pi_1(X,T)(p)$ is trivial.
\end{coro}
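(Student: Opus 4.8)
The plan is to derive everything from the results already established: Proposition~\ref{finite} (which computes $\pi_1(X,T)(p)$ as a cyclic group of $p$-power order), Corollary~\ref{notkpi1} (the obstruction coming from a finite nontrivial fundamental group), the Euler--Poincar\'e formula of Corollary~\ref{EP}, and the vanishing $H^i(X,T)=0$ for $i\ge 3$ from Proposition~\ref{GC}.

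First I would record that, by Proposition~\ref{finite}, $\pi_1(X,T)(p)$ is the finite cyclic group $\Gal(\F'/\F)$ of order $p^r$, where $p^r$ is the largest $p$-power dividing $\gcd(\deg x,\ x\in T)$. For the ``only if'' direction, assume that $(X,T)$ has the $K(\pi,1)$-property for $p$. If $\pi_1(X,T)(p)$ were nontrivial, Corollary~\ref{notkpi1} would immediately contradict the $K(\pi,1)$-property; hence $\pi_1(X,T)(p)=1$. Since the group is trivial, Lemma~\ref{kpi} gives $H^1(X,T)=0$ (the edge map $\phi_{1,\F_p}$ is an isomorphism onto $H^1$ of the trivial group), and the $K(\pi,1)$-property makes $\phi_{2,\F_p}$ an isomorphism with zero source, so $H^2(X,T)=0$ as well. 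Together with $H^i(X,T)=0$ for $i\ge 3$, Corollary~\ref{EP} then collapses to $1=\#T$, so $T=\{x\}$ is a single point; and $p^r=1$ forces $p\nmid\deg x$. (The triviality of $\pi_1(X,T)(p)$ asserted in the statement has thus been proved along the way.)

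For the ``if'' direction, assume $T=\{x\}$ with $p\nmid\deg x$. Then $p^r=1$, so Proposition~\ref{finite} gives $\pi_1(X,T)(p)=1$ and hence $H^1(X,T)=0$ as above. Using $H^i(X,T)=0$ for $i\ge 3$ and $\#T=1$, the Euler--Poincar\'e formula forces $\dim_{\F_p}H^2(X,T)=0$. Thus $H^i(X,T)=0$ for all $i\ge 1$, and since $\pi_1(X,T)(p)=1$ the universal pro-$p$-covering is $(X,T)$ itself, so the vanishing criterion of Lemma~\ref{kpi} applies and $(X,T)$ is a $K(\pi,1)$ for $p$.

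The argument is essentially bookkeeping once the earlier propositions are in place; the only point requiring a little care is the logical organisation of the ``only if'' part, where one must separately dispose of the case of nontrivial finite $\pi_1(X,T)(p)$ (via Corollary~\ref{notkpi1}) and the case of trivial $\pi_1(X,T)(p)$ with $\#T\ge 2$ (via the Euler--Poincar\'e count), and make sure that in the latter case the vanishing of $H^1(X,T)$ is genuinely a consequence of $\pi_1(X,T)(p)=1$ rather than an extra hypothesis.
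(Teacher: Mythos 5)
Your proof is correct and takes essentially the same route as the paper's: it rests on Proposition~\ref{finite}, Corollary~\ref{notkpi1}, the Euler--Poincar\'e formula of Corollary~\ref{EP} and the criterion of Lemma~\ref{kpi}, merely reorganised by direction of the equivalence rather than by cases on whether $p$ divides $\gcd(\deg x,\ x\in T)$. No gaps; the paper's version is just terser.
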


\begin{proof}
By Proposition~\ref{finite}, $\pi_1(X,T)(p)$ is finite cyclic. If $p\mid gcd(\deg x,\ x \in T)$, then  $\pi_1(X,T)(p)$  is nontrivial and $(X,T)$ is not a $K(\pi,1)$ for $p$ by Corollary \ref{notkpi1}.

Assume  $p\nmid gcd(\deg x \mid x \in T)$. Then $\pi_1(X,T)(p)$ is the trivial group, $H^1(X,T)=0$ and $(X,T)$ is  a $K(\pi,1)$ if and only if $H^2(X,T)=0$. By Corollary~\ref{EP} this is equivalent to $\# T=1$.
\end{proof}

\begin{lemm} Assume that $\pi_1(X,T)(p)$ is finite and $\Pic(X)[p]\neq 0$. Then $(X,T)$ is not a $K(\pi,1)$ for $p$.
\end{lemm}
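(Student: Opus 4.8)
The statement we need to prove is: if $\pi_1(X,T)(p)$ is finite and $\Pic(X)[p]\neq 0$, then $(X,T)$ is not a $K(\pi,1)$ for $p$. The natural strategy mirrors the proof of Corollary~\ref{notkpi1}: show that finiteness of $G := \pi_1(X,T)(p)$ forces $\cd G = \infty$ unless $G$ is trivial, while Proposition~\ref{GC} tells us $H^i(X,T) = 0$ for $i \geq 3$; a $K(\pi,1)$ would then force $G = 1$, and we derive a contradiction from $\Pic(X)[p] \neq 0$.

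\begin{preuve}
Set $G=\pi_1(X,T)(p)$ and suppose $(X,T)$ has the $K(\pi,1)$-property for $p$. Then the edge morphisms $\phi_{i,\F_p}\colon H^i(G)\to H^i(X,T)$ are isomorphisms for all $i\ge 0$. By Proposition~\ref{GC} we have $H^i(X,T)=0$ for $i\ge 3$, so $H^i(G)=0$ for $i\ge 3$, i.e.\ $\cd G\le 2$. A nontrivial \emph{finite} $p$-group has infinite cohomological dimension (its cohomology is nonzero in arbitrarily high degrees, cf.\ \cite{NSW}, Prop.~3.3.5), so the hypothesis that $G$ is finite forces $G=1$. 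Consequently $H^1(X,T)\cong H^1(G)=0$ and $H^2(X,T)\cong H^2(G)=0$. But then Corollary~\ref{EP} gives $\#T=\sum_{i=0}^2(-1)^i\dim_{\F_p}H^i(X,T)=1$, so $T=\{x\}$ is a single point, and moreover $H^2(X,T)=0$.

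It remains to see that $\Pic(X)[p]\neq 0$ together with $T=\{x\}$ makes $H^2(X,T)$ nonzero, a contradiction. Feed this into the exact sequence of Proposition~\ref{GC}:
\[
H^1(X)\to H^1_{nr}(k_x)\to H^2(X,T)\to H^2(X)\to 0.
\]
By Lemma~\ref{invconv} and the computation in the proof of Corollary~\ref{EP}, $H^2(X)=\Hom(\Pic(\widebar X)[p],\F_p)_{G_\F}$, and Lemma~\ref{invconv} identifies its dimension with $\dim_{\F_p}\Pic(X)[p]>0$. Hence $H^2(X)\neq 0$, and since $H^2(X,T)$ surjects onto $H^2(X)$ we get $H^2(X,T)\neq 0$, contradicting $H^2(X,T)=0$. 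Therefore $(X,T)$ is not a $K(\pi,1)$ for $p$.
\end{preuve}

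The single potential subtlety — and the only place the argument could go wrong — is the step asserting that a nontrivial finite $p$-group has infinite cohomological dimension over $\F_p$; this is standard but should be cited (via \cite{NSW}). Everything else is a direct assembly of Proposition~\ref{GC}, Corollary~\ref{EP}, and Lemma~\ref{invconv}, so I expect no real obstacle. An alternative, even shorter route: once $G$ finite and $K(\pi,1)$ force $\cd G\le 2$, Corollary~\ref{notkpi1} already says a finite nontrivial $G_S^T(k)=G^T(k)$ cannot give a $K(\pi,1)$; so it suffices to rule out $G=1$, and $G=1$ would give $H^1(X,T)=H^2(X,T)=0$, hence $\#T=1$ and $H^2(X)=0$, contradicting $\Pic(X)[p]\neq 0$ exactly as above. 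I would present it in this streamlined form, leaning on Corollary~\ref{notkpi1} rather than re-deriving the cohomological-dimension fact.
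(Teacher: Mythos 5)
Your proof is correct and follows essentially the same route as the paper: reduce to the case $\pi_1(X,T)(p)=1$ (the nontrivial finite case being exactly Corollary~\ref{notkpi1}, which you re-derive in your main version and invoke directly in your streamlined one), and then contradict $H^2(X,T)=0$ by noting that $H^2(X,T)$ surjects onto $H^2(X)\cong\Hom(\Pic(X)[p],\F_p)\neq 0$ via Proposition~\ref{GC} and Lemma~\ref{invconv}. The detour through Corollary~\ref{EP} to conclude $\#T=1$ is harmless but unnecessary, since the surjection onto $H^2(X)$ gives the contradiction for any $T$.
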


\begin{proof}
By Corollary \ref{notkpi1}, $(X,T)$ is not a $K(\pi,1)$ for $p$ if $\pi_1(X,T)(p)$ is nontrivial. Assume that $\pi_1(X,T)(p)=1$. Then $(X,T)$ is a $K(\pi,1)$ for $p$ if and only if $H^2(X,T)=0$. But by Proposition~\ref{GC}, $H^2(X)\cong\Hom(\Pic(X)[p],\F_p)\neq 0$ is a quotient of $H^2(X,T)$. \end{proof}

The following theorem is due to Ihara, see \cite{Ih}, Thm.~1 (FF).

\begin{theo}
Assume that $T\ne\varnothing$ and let $q=\# \F$. If
\[
\sum_{x\in T} \frac{\deg (x)}{q^{\deg (x)/2}-1} > \max(g_X-1,0),
\]
then $\pi_1(X,T)$ is finite. In particular, $\pi_1(X,T)(p)$ is finite.
\end{theo}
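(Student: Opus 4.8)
The plan is to realize $\pi_1(X,T)(p)$ as a quotient of a quotient of $\pi_1(X,T)$ that we can control by counting, and then invoke Ihara's finiteness result for the full (not just pro-$p$) fundamental group. Since $\pi_1(X,T)(p)$ is the maximal pro-$p$ quotient of $\pi_1(X,T)$, it suffices to prove that $\pi_1(X,T)$ itself is finite, which is exactly the content of Ihara's Theorem~1~(FF) in \cite{Ih}. So the real work is to check that our hypothesis matches Ihara's, and to say a word about why finiteness of $\pi_1(X,T)$ forces finiteness of $\pi_1(X,T)(p)$ (the latter being a continuous quotient of the former, hence a quotient of a finite group, hence finite). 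The "$\max(g_X-1,0)$" on the right-hand side is harmless: if $g_X=0$ then $X=\mathbb{P}^1$ and already $\pi_1(X,T)$ is visibly finite (in fact trivial or close to it once $T\ne\varnothing$), so the only interesting range is $g_X\ge 1$, where $\max(g_X-1,0)=g_X-1$, which is the bound appearing in Theorem~\ref{th2}(ii).

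The main structural point I would spell out is the translation of Ihara's criterion into the present notation. Ihara works with the Galois group of the maximal extension of $k=\F(X)$ unramified everywhere in which all places of $T$ split completely; this is precisely $\pi_1(X,T)$ in our language, the marked fundamental group classifying finite \'etale covers of $X$ in which the points of $T$ split. His finiteness condition is that the sum $\sum_{x\in T}\deg(x)/(q^{\deg(x)/2}-1)$ exceeds $g_X-1$; the quantity on the left is an Euler-characteristic-type defect coming from the Weil bound $\#X(\F_{q^{\deg x}})\ge q^{\deg x}+1 - 2g_X q^{\deg x /2}$, which forces any tower of $T$-split covers to terminate once the number of required split points outgrows the genus growth permitted by Riemann--Hurwitz plus Weil. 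I would cite this rather than reprove it, but I would at least recall the mechanism: in an unramified $T$-split cover $Y\to X$ of degree $n$, the genus satisfies $2g_Y-2 = n(2g_X-2)$ by Riemann--Hurwitz, while $Y$ must contain at least $n\sum_{x\in T}$ (number of degree-$\deg x$ points of $T$, counted appropriately) rational points over the relevant constant field extensions; comparing with the Weil upper bound on the point count of $Y$ gives an a priori bound on $n$.

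The step I expect to be delicate, and the one worth being careful about, is precisely this matching: making sure that "split completely" in Ihara's sense (splitting into $\deg(x)$ distinct places each of residue degree one over the base, i.e. the condition defining the marked site in \cite{Sch3}) agrees with the definition of $\pi_1(X-S,T)$ recalled in the introduction, and that the normalization of degrees in the sum is the same on both sides. Once that dictionary is fixed, the proof is a one-line appeal: by \cite{Ih}, Thm.~1~(FF), the hypothesis implies $\#\pi_1(X,T)<\infty$; since $\pi_1(X,T)(p)$ is a quotient of $\pi_1(X,T)$, it is finite as well. I would also note explicitly that the conclusion "$\pi_1(X,T)(p)$ is finite" is exactly the hypothesis feeding into the preceding Lemma (the one asserting $(X,T)$ is not a $K(\pi,1)$ when $\pi_1(X,T)(p)$ is finite and $\Pic(X)[p]\ne 0$), so that combining the two yields Theorem~\ref{th2}(ii); but that combination belongs to the assembly of Theorem~\ref{th2}, not to the proof of this theorem itself.
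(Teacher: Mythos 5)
Your proposal is correct and matches the paper exactly: the paper gives no proof of this statement, simply attributing it to Ihara (\cite{Ih}, Thm.~1~(FF)), and the finiteness of $\pi_1(X,T)(p)$ follows trivially as it is a continuous quotient of the finite group $\pi_1(X,T)$. Your additional remarks on the Riemann--Hurwitz/Weil-bound mechanism and the dictionary with Ihara's setting are accurate but not part of the paper's (non-)argument.
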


Summing up, we obtain Theorem~\ref{th2}.

\section{Proof of Theorem~\ref{uglp}}

Let $\widetilde \F$ be the maximal $p$-extension of $\F$ in $\widebar \F$ and  $\widetilde X=X \times_\F \widetilde \F$. Then
\[
X \text{ is a } K(\pi,1) \text{ for } p \Longleftrightarrow \tilde X \text{ is a } K(\pi,1) \text{ for } p
\]
and we have
\[
H^i_\et(\widetilde X) \cong H^i_\et(\widebar X)^{G(\widebar \F / \widetilde \F)}
\]
for all $i$. Hence $H^i_\et(\widetilde X)$ vanishes for $i\geq 3$ and $H^2_\et(\widetilde X)=\mu_p(\widetilde \F)^*= \mu_p(\F)^*$.
We conclude that $X$ has the  $K(\pi,1)$-property for $p$ if $\mu_p(\F)=1$. In the following we assume that $\F$ contains all $p$-th roots of unity.   For every tower of finite connected \'{e}tale $p$-coverings $Z\to  Y\to X$  the natural map
\[
 \Z/p\Z  = H^2_\et(\widetilde Y,\mu_p) \longrightarrow H^2_\et(\widetilde Z,\mu_p)=\Z/p\Z
\]
is multiplication by the degree  $[\widetilde Z:\widetilde Y]$. Hence, by Lemma~\ref{kpi},
\[
\widetilde X \text{ is a } K(\pi,1) \text{ for } p \Longleftrightarrow \# \big(\pi_1^\et(\widetilde X)(p)\big)= \infty.
\]
Note that
\[
\begin{array}{rcl}
\pi_1^\ab(\widetilde X)/p& \cong & H^1_\et(\widetilde X)^*\\
&\cong & \big(H^1_\et(\widebar X)^{G(\widebar \F /\widetilde \F)})^* \\
&\cong& \Pic(\widebar X)[p]_{G(\widebar \F /\widetilde \F)}
\end{array}
\]
and by Lemma~\ref{invconv}
\[
\Pic(\widebar X)[p]_{G(\widebar \F /\widetilde \F)} = 0 \Longleftrightarrow  \Pic(\widetilde X)[p]=0.
\]
Furthermore, since $G(\widetilde \F/\F)$ is a pro-$p$-group:
\[
\Pic(\widetilde X)[p]=0 \Longleftrightarrow \Pic(X)[p]= \Pic(\widetilde X)[p]^{G(\widetilde \F/\F)} = 0,
\]
and therefore, $\Pic(X)[p]\neq 0\Longleftrightarrow \pi_1^\ab(\widetilde X)/p\neq 0$.
Hence it suffices to show the equivalences
\[
\# \big(\pi_1^\et(\widetilde X)(p)\big)= \infty \Leftrightarrow \# \big(\pi_1^\ab(\widetilde X)(p)\big)= \infty \Leftrightarrow \# \pi_1^\ab(\widetilde X)/p\neq 0.
\]
Elementary theory of pro-$p$-groups shows that it remains to show the implication
\[
\pi_1^\ab(\tilde X)/p\neq 0 \Longrightarrow \# \big(\pi_1^\ab(\tilde X)(p)\big) =\infty.
\]
Setting $T:=T_p(\widebar X)=\pi_1^\ab(\widebar X)(p)$, we can write this implication in the form
\[
(T_{G(\widebar \F/\widetilde \F)})/p\neq 0 \Longrightarrow \# (T_{G(\widebar \F/\widetilde \F)}) = \infty.
\]
The group $G(\widebar \F / \tilde \F)$ is pro-cyclic of supernatural order prime to $p$. Furthermore, $T\cong \Z_p^{2g}$ and the kernel of the reduction map $\Gl_{2g}(\Z_p)\to \Gl_{2g}(\F_p)$ is a  pro-$p$-group. Hence the action of $G(\widebar \F / \widetilde \F)$ on $T$ factors through a finite cyclic group of order prime to $p$. We conclude that Theorem~\ref{uglp} follows from Lemma~\ref{koinvlem} below. \hfill $\Box$

\bigskip
The following Lemma \ref{koinvlem} and its application in the proof of Theorem~\ref{uglp} were proposed to us by J.~Stix. We thank the referee for suggesting the short proof given below.

\begin{lemm}\label{koinvlem}
Let $G$ be a finite group of order $n$, $p$  a prime number with  $p\nmid n$ and $T$  a finitely generated free $\Z_p$-module with a $G$-action. Then
\[
\# T_G = \infty \Longleftrightarrow (T/p)_G \neq 0.
\]
\end{lemm}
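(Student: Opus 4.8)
The plan is to reduce everything to linear algebra over $\Z_p$ using the fact that $\Z_p[G]$ is a maximal order (or more concretely, that averaging over $G$ makes sense because $n$ is a unit in $\Z_p$). First I would prove the implication $(T/p)_G = 0 \Rightarrow \# T_G < \infty$ in its contrapositive-free form: since $p \nmid n$, the element $e = \frac{1}{n}\sum_{g\in G} g$ lies in $\Z_p[G]$ and is an idempotent, so $T = eT \oplus (1-e)T$ as $\Z_p[G]$-modules, with $G$ acting trivially on $eT$ and with $(1-e)T$ having trivial $G$-coinvariants over $\Z_p$. One computes $T_G = eT$ directly (the augmentation-style identity $T_G = T/IT$ where $I$ is the augmentation ideal, together with $IT = (1-e)T$). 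Reducing mod $p$ commutes with taking $e$-components because $e$ has $p$-integral coefficients, so $(T/p)_G = eT/p = (eT)/p$. Hence $\# T_G = \infty$ iff $eT \neq 0$ iff $eT/p \neq 0$ (Nakayama, since $eT$ is a finitely generated $\Z_p$-module) iff $(T/p)_G \neq 0$.

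The key steps, in order: (1) introduce the idempotent $e = \frac{1}{n}\sum_{g} g \in \Z_p[G]$, using $p\nmid n$; (2) split $T = eT \oplus (1-e)T$ and identify $T_G$ with $eT$, which is a free $\Z_p$-module of rank $\dim_{\Q_p}(T\otimes\Q_p)^G$; (3) observe that $-\otimes_{\Z_p}\F_p$ is exact and commutes with the idempotent decomposition, so $(T/p)_G = (eT)\otimes\F_p$; (4) apply Nakayama's lemma to the finitely generated $\Z_p$-module $eT$ to conclude $eT \neq 0 \Leftrightarrow eT\otimes\F_p \neq 0$; (5) note $\# eT = \infty$ precisely when $eT\neq 0$, since a nonzero free $\Z_p$-module is infinite while a zero module is finite.

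I expect no serious obstacle here — the content is entirely that $p \nmid n = \#G$ makes $\Z_p[G]$ semisimple enough for the trivial-isotypic projector to exist integrally, so that coinvariants are a direct summand and behave well under reduction mod $p$. The only point requiring a little care is step (2), correctly identifying $T_G = T/IT$ with $eT$ (equivalently, checking $IT = (1-e)T$): one has $e\cdot I = 0$ so $IT \subseteq (1-e)T$, and conversely $(1-e)T \subseteq IT$ because $1-e = \frac{1}{n}\sum_g(1-g)$ expresses $1-e$ as an element of the augmentation ideal. Everything else is formal.
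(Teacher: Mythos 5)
Your proof is correct and is essentially the paper's argument in different clothing: your idempotent $e=\frac{1}{n}\sum_{g\in G} g$ is the normalized norm map $N=\sum_g g$, your decomposition $T=eT\oplus(1-e)T$ with $T_G\cong eT=T^G$ is exactly the paper's split exact sequence $0\to\ker(N)\to T\to T^G\to 0$, and your computation $IT=(1-e)T$ plays the role of the paper's appeal to $\hat H^{-1}(G,\ker N)=0$ to kill the coinvariants of the complement. Both arguments then finish identically, by noting that the identification commutes with reduction mod $p$ and that a finitely generated free $\Z_p$-module is nonzero iff it is infinite iff it is nonzero mod $p$.
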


\begin{proof}
Since the Tate cohomology of $\Z_p[G]$-modules vanishes, we obtain the split exact sequence of $\Z_p[G]$-modules
\[
0 \longrightarrow \ker(N) \longrightarrow T \stackrel{N}{\longrightarrow} T^G \longrightarrow 0,
\]
where $N=\sum_{g\in G} g$. For $B=\ker(N)$, $\hat H^{-1}(G,B)=0$ implies $B_G=0$. We obtain $T_G\cong T^G$ and $(T/p)_G\cong (T^G)/p$.
Hence both assertions of the lemma are equivalent to $T^G\ne 0$.
\end{proof}

\newpage

\bigskip\small
{\sc Laboratoire de Math\'ematiques de Besan\c con, 16 route de Gray, 25030 Besan\c con, France}

{\sc INRIA Saclay - Ile-de-France, Equipe-projet GRACE}

email: {\tt philippe.lebacque@univ-fcomte.fr}

\medskip
{\sc Universit\"{a}t Heidelberg, Mathematisches Institut, Im Neuenheimer Feld 288, D-69120 Heidelberg, Deutschland}

email: {\tt schmidt@mathi.uni-heidelberg.de}
\end{document}